\newtheorem{Thm}{Theorem}[section]
\newtheorem{Prop}[Thm]{Proposition}
\newtheorem{Lemma}[Thm]{Lemma}
\newtheorem{coro}[Thm]{Corollary}
\newtheorem{remark}[Thm]{Remark}
\newtheorem{Ex}[Thm]{Example}
\begin{document}
	
	\begin{frontmatter}
		
		\title{Maximal signed bipartite graphs with totally disconnected graphs as star complements \tnoteref{1} }
		\tnotetext[1]{This research is partially supported by NSFC of China No. 12171088.}
		
		\author[]{Huiqun Jiang$^{*}$}
		\cortext[]{Corresponding author.}
		\ead{huiqun.jiang@foxmail.com}
		\author{Yue Liu}
		
		\address{School of Mathematics and Statistics, Fuzhou University, Fuzhou, 350108, Fujian, China.}
		
		\begin{abstract}
			Let $\dot{\mathscr{B}}\triangleq \dot{\mathscr{B}}_{\dot{H}, \mu}$ denote an arbitrary signed bipartite graph with $\dot{H}$ as a star complement for an eigenvalue $\mu$, where $\dot{H}$ is a totally disconnected graph of order $s$. In this paper, by using Hadamard and Conference matrices as tools, the maximum order of $\dot{\mathscr{B}}$ and the extremal graphs are studied. It is shown that $\dot{\mathscr{B}}$ exists if and only if $\mu^2$ is a positive integer. A formula of the maximum order of $\dot{\mathscr{B}}$ is given in the case of $\mu^2=p\times q$ such that $p$, $q$ are integers and there exists a $p$-order Hadamard or $(p+1)$-order Conference matrix. In particular, it is proved the maximum order of $\dot{\mathscr{B}}$ is $2s$ when either $q=1$, $s=c\mu^2=cp$ or $q=1$, $s=c(\mu^2+1)=c(p+1)$, $ c=1,2,3,\cdots$.
			Futhermore, some extremal graphs are characterized.
			
		\end{abstract}
		
		\begin{keyword} 
			Star complement; Signed bipartite graph; Totally disconnected graph; Hadamard matrix; Conference matrix
		\end{keyword}

	\end{frontmatter}
	
	\section{Introduction} 
	A \textit{signed graph} is a pair $(G, \sigma)$, where $G$ is a simple graph, called the \textit{underlying graph}, and $\sigma: E \to \{ -1,\,1\}$ is a mapping, called the \textit{sign function} or \textit{signature}. 
	In certain circumstances, it will be more convenient to deal with $(G, \sigma)$ in such a way that its set of negative edges is emphasized, in which case we will write $(G,\, \Sigma^{-})$ instead, where $\Sigma^{-}=\sigma^{-1} (-)$ denotes the set of negatives edges. 
	A simple graph $G$ can be regarded as a signed graph $(G,\emptyset)$.
	
	The \textit{Reconstruction Theorem} is well-known in the study of the spectrum of simple or signed graphs, which is firstly proved by Cvetkovi$\acute{\text{c} }$, Rowlison and Simi$\acute{\text{c} }$ in \cite{37}. The role of an individual eigenvalue in the structure of a simple or signed graph is revealed by the theorem.
	
	The \textit{star complement technique} \cite{35} is a procedure for constructing a simple or signed graph with a prescribed star complement for an eigenvalue to illustrate the use of the Reconstruction Theorem. In this procedure, there is a necessary condition that the eigenvalue does not appear in the spectrum of the prescribed star complement.
	In \cite{1}, by using this technique and the Kronecker product, Ramezani constructs some families of the signed graphs with only two opposite eigenvales.  
	
	The typical usages of this technique are to determine the maximum order of simple or signed graphs, and construct maximal simple or signed graphs.
	In \cite{3}, by using this technique, Ramezani, Rowlinson and Stani$\acute{\text{c} }$ prove that the maximum order $n$ of signed graphs is no more than $ \tbinom{n-k+2}{3}$, in which $k$ is the multiplicity of an eigenvalue $\mu\notin \{-1,0,1\}$.
	In \cite{4}, Yuan, Mao and Liu characterize maximal signed graphs with signed $C_3$ or $C_5$ as star complements for $-2$.
	In \cite{2}, Mulas and Stani$\acute{\text{c} }$ characterize maximal signed graphs $(G, \Sigma^{-})$ with any signed subgraphs whose spectrum lies in $(-2,2)$ as star complements, where $(G, \Sigma^{-})$ has the eigenvalues $\pm2$.
	
	Let $\dot{H}\triangleq (sK_1, \emptyset)$ be a totally disconnected signed graph of order $s$. In \cite{17}, Stani$\acute{\text{c} }$ studies signed graphs $(G,\, \Sigma^{-})$ with $\dot{H}$ as a star complement for an eigenvalue $\mu$. It is shown that if $s=\mu^2$, then the star set induces $\big((n-\mu^2) K_1, \emptyset \big)$ or $\big( K_{n-\mu^2},\, \emptyset \big)$, and the number of different eigenvalues of $(G,\, \Sigma^{-})$ is two or three, where $n$ is the order of $(G,\, \Sigma^{-})$. 
	
	It is easy to see $(G, \Sigma^{-})$ is bipartite in the case that the star set induces $\big( (n-\mu^2) K_1, \emptyset \big)$. Therefore, we study the following problem in this paper:
	
	\textbf{Problem $\mathcal{P}$:} 
	Let $\dot{H}$ be a totally disconnected graph of order $s$, $\mu$ be a non-zero real number. What is the maximum order of the signed bipartite graph $\dot{\mathscr{B}}$ such that $\dot{\mathscr{B}}$ has $\dot{H}$ as a star complement for the eigenvalue $\mu$ ?

	In Section 3, we show that $\dot{\mathscr{B}}$ exists if and only if $\mu^2$ is a positive integer and $s\geq \mu^2$ (see Proposition \ref{exist}).
	Since the adjacency matrix of $\dot{\mathscr{B}}$ can be orthogonal, then Hadamard and Conference matrices can be tools to study the problem $\mathcal{P}$. We can find some properties of Hadamard and Conference matrices in \cite{40}.

	A formula of the maximum order of $\dot{\mathscr{B}}$ is given in the case of $\mu^2=p\times q$ such that $p$, $q$ are integers and there exists a $p$-order Hadamard or $(p+1)$-order Conference matrix. In particular, the maximum order of $\dot{\mathscr{B}}$ is $2s$ in the case of $s=c\mu^2$ such that $\mu^2$ is the order of a Hadamard matrix, or $s=c(\mu^2+1)$ such that $\mu^2+1$ is the order of a Conference matrix (see Corollary \ref{coro}).
	
	The Section 2 contains terminologies and notations along with the Reconstruction Theorem. 
	In Section 4, some extremal graphs are characterized.

	\section{Preliminaries}	
	Let $(G, \Sigma^{-})$ be a signed graph of order $n$. If the vertices $u$ and $v$ of $(G, \Sigma^{-})$ are adjacent, then we write $u\sim v$. In particular, if they are adjacent by a positive (or negative) edge, then we write $u\stackrel{+}{\sim} v$ (or $u\stackrel{-}{\sim} v$ ). If $u$ and $v$ are not adjacent, then we write $u\nsim v$. The adjacency matrix of $(G, \Sigma^{-})$ is the $n \times n$ matrix $A_{(G, \Sigma^{-})}=(a_{ij})$, where $a_{ij}=1$ if $u\stackrel{+}{\sim} v$, $a_{ij}=-1$ if $u\stackrel{-}{\sim} v$ and $a_{ij}=0$ if $u\nsim v$. 
	
	Let $\mu$ be an eigenvalue of $(G, \Sigma^{-})$ with the multiplicity $k$. If $S$ is a subset of the set $V(G)$ such that $|S|=k$ and $\mu$ is not an eigenvalue of the induced subgraph $(G, \Sigma^{-})-S$, then $S$ is called a \textit{star set} for $\mu$ in $(G, \Sigma^{-})$ and the graph $(G, \Sigma^{-})-S$ (of order $n-k$) is called a \textit{star complement} for $\mu$ in $(G, \Sigma^{-})$. The properties of star sets and star complements for the corresponding eigenvalues can be found in \cite{37,19,35}. 
	
	The following result, called the Reconstruction Theorem, is fundamental to the theory of star complements.
	
	\begin{Thm} \label{reconstruction}
		\cite[Theorem 2.1]{3}
		Let $(G, \Sigma^{-}) $ be a graph with adjacency matrix
		\[\left(\begin{array}{cc}
			A_{S}	& B^T  \\ 
			B   & C
		\end{array}\right),\] 
		where $A_{S}$ is the $k\times k$ adjacency matrix of the subgraph induced by a vertex set $S$. 
		Then $S$ is a star set for $\mu$ in $(G, \Sigma^{-})$ if and only if $\mu$ is not an eigenvalue of $C$ and 
		\begin{equation}
			\label{one}
			\mu I-A_{S}=B^T(\mu I-C)^{-1}B.	
		\end{equation}
	\end{Thm}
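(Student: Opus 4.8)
The plan is to work directly with the eigenvalue equation $A\mathbf{x}=\mu\mathbf{x}$ for the full adjacency matrix $A$ displayed in the statement, splitting each vector as $\mathbf{x}=(\mathbf{x}_S,\mathbf{x}_{\bar S})^T$ along the partition $V(G)=S\cup\bar S$ that produced the block form. Reading off the two block rows, the single equation $A\mathbf{x}=\mu\mathbf{x}$ is equivalent to the pair
\begin{align*}
A_S \mathbf{x}_S + B^T \mathbf{x}_{\bar S} &= \mu \mathbf{x}_S, \\
B \mathbf{x}_S + C \mathbf{x}_{\bar S} &= \mu \mathbf{x}_{\bar S}.
\end{align*}
The pivot of the whole argument is the second row, rewritten as $(\mu I - C)\mathbf{x}_{\bar S}=B\mathbf{x}_S$: precisely when $\mu\notin\mathrm{Spec}(C)$ is the matrix $\mu I-C$ invertible, and then the complement part of every $\mu$-eigenvector is pinned down by its $S$-part via $\mathbf{x}_{\bar S}=(\mu I-C)^{-1}B\,\mathbf{x}_S$.

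For the forward implication I would assume $S$ is a star set, so that by definition $\mu\notin\mathrm{Spec}(C)$ and the multiplicity of $\mu$ equals $k=|S|$. First I would show that the coordinate projection $\mathbf{x}\mapsto\mathbf{x}_S$ is injective on the $\mu$-eigenspace: if $\mathbf{x}_S=\mathbf{0}$, the second block row gives $C\mathbf{x}_{\bar S}=\mu\mathbf{x}_{\bar S}$, and $\mu\notin\mathrm{Spec}(C)$ forces $\mathbf{x}_{\bar S}=\mathbf{0}$. Since the eigenspace and the target $\mathbb{R}^S$ both have dimension $k$, this injection is in fact a bijection, so every $\mathbf{x}_S\in\mathbb{R}^S$ arises with $\mathbf{x}_{\bar S}=(\mu I-C)^{-1}B\,\mathbf{x}_S$. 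Substituting into the first block row then yields $(\mu I-A_S)\mathbf{x}_S=B^T(\mu I-C)^{-1}B\,\mathbf{x}_S$ for all $\mathbf{x}_S$, which is exactly the matrix identity (\ref{one}).

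For the converse I would assume $\mu\notin\mathrm{Spec}(C)$ together with (\ref{one}) and rebuild the eigenspace explicitly. For each $\mathbf{v}\in\mathbb{R}^S$ I set $\mathbf{x}=(\mathbf{v},\,(\mu I-C)^{-1}B\mathbf{v})^T$; a direct substitution shows the second block row holds automatically, while the first block row reduces to (\ref{one}) and hence also holds. This produces a $k$-dimensional space of $\mu$-eigenvectors, so the multiplicity of $\mu$ is at least $k$. The reverse bound comes from the same projection argument as before: every $\mu$-eigenvector is determined by its $S$-part, so the multiplicity is at most $k$. Thus $\mu$ has multiplicity exactly $k=|S|$ and $\mu\notin\mathrm{Spec}(C)$, which is precisely the definition of $S$ being a star set.

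I expect the main obstacle to be bookkeeping rather than computation: one must keep careful track of how the definition of \emph{star set} bundles together the multiplicity condition $|S|=k$ and the spectral condition $\mu\notin\mathrm{Spec}(C)$, and it is the interplay of the projection's injectivity with the dimension count that upgrades a pointwise relation among eigenvectors into the clean matrix equation (\ref{one}). In both directions the invertibility of $\mu I-C$ is the single fact doing the real work.
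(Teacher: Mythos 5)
Your proof is correct, and it is essentially the standard argument for the Reconstruction Theorem: the paper itself states this result without proof (it is quoted from \cite{3}, extending \cite[Theorem 7.4.1]{19}), and the proof given in those sources proceeds exactly as yours does — using invertibility of $\mu I-C$ to pin $\mathbf{x}_{\bar S}=(\mu I-C)^{-1}B\mathbf{x}_S$, injectivity of the projection onto the $S$-coordinates plus a dimension count in the forward direction, and the explicit construction of a $k$-dimensional $\mu$-eigenspace in the converse. Since nothing in the argument uses that the matrix entries lie in $\{0,1\}$, it carries over verbatim from simple to signed graphs, which is precisely why the cited extension holds.
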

	
	This result is initially formulated in the context of simple graphs. Since every simple graph can be interpreted as a signed graph, then it follows that Theorem \ref{reconstruction} is an extension of \cite[Theorem 7.4.1]{19}. In \cite{19,35}, Cvetkovi$\acute{\text{c} }$, Rowlinson and Simi$\acute{\text{c} }$ prove that star sets and star complements exist for any eigenvalues in simple or signed graphs.

	In order to solve the problem $\mathcal{P}$, it is necessary to compute an inverse of an inversable matrix forming as $\mu I-C$. In \cite[Proposition 5.2.1]{35}, there is the way to compute the matrix $(\mu I-C)^{-1}$. %\cite[Proposition 5.2.1]{35}
	
	Let $S$ be a star set of order $k$ for $\mu$ in $(G, \Sigma^{-})$. A bilinear form on $\mathbb{R}^{n-k}$ is defined as follows:
	\[ \langle x,\, y \rangle=x^{T} (\mu I-C)^{-1} y \quad (x,\, y\in \mathbb{R}^{n-k}).\] 
	Let $u$ be a vertex in $S$, $\mathbf{b}_u $ be a row of $B^T$ which  determines the neighbors of $u$ in the signed subgraph $(G, \Sigma^{-})-S$ and the signatures of associate edges. By Equation (\ref{one}), for every two vertices $u,\, v \in S $, we have
	\begin{equation} \label{R}
		\langle \mathbf{b}_u,\mathbf{b}_v \rangle=
		\left\{\begin{aligned}
			&\mu \quad &\text{if}\quad u=v ,\\
			&-1 \quad  &\text{if} \quad u \stackrel{+}{\sim} v ,\\
			&1 \quad   &\text{if} \quad u \stackrel{-}{\sim} v ,\\
			&0 \quad   &\text{if} \quad u \nsim v .
		\end{aligned}\right.
	\end{equation}
	If $\langle \mathbf{b}_u,\mathbf{b}_u \rangle=\mu$, then $u$ is said to be \textit{good} for $\mu$. If both $u$, $v$ are good for $\mu$, and $\langle \mathbf{b}_u,\mathbf{b}_v \rangle\in \{ -1,0, 1\}$, then $u$, $v$ are said to be \textit{compatible} for $\mu$. 
	
	Let $\dot{H}'$ be a signed graph with the adjacency matrix $C$, $\mu$ be a real number such that $\mu$ does not appear in the spectrum of $\dot{H}'$. Then $\mu$ does appear in the graph obtained by adding edges bewtween $\dot{H}'$ and a single vertex $u$, where the edges are allowed to have signatures by Equation (\ref{R}). It follows that $u$ is good for $\mu$ if and only if $\langle \mathbf{b}_u,\mathbf{b}_u \rangle=\mu$; if both $u$ and $v$ are good for $\mu$, then $u,v$ are compatible for $\mu$ if and only if $\langle \mathbf{b}_u,\mathbf{b}_v \rangle\in \{-1,0,1\}$. 
	
	Thus, in order to solve the problem $\mathcal{P}$, pairwise compatible vertices for $\mu$ are required as many as possible.

	\section{The maximum order of $\dot{\mathscr{B}}$}
	Let $\dot{H}$ be totally disconnected, $V(\dot{H})=\{ 1,2 \cdots,s \}$, $u $ be a single vertex such that $u \notin V(\dot{H})$, $\mathbf{b}_u =(b_{u1}, \cdots, b_{us} )^T$ be a $(0,\pm 1)$-vector such that $\mathbf{b}_u$ determines the neighbors of $u$ in $\dot{H}$ and the signatures of associate edges $u1, u2, \cdots,us$. By Equation (\ref{R}), we get
	\begin{equation} \label{1}
		\sum_{i=1}^{s} b_{ui} b_{vi}=
		\left\{\begin{aligned}
			&\mu^2 \quad &\text{if}\quad u=v ,\\
			&-\mu \quad &\text{if} \quad u \stackrel{+}{\sim} v ,\\
			&\mu \quad &\text{if} \quad u \stackrel{-}{\sim} v ,\\
			&0 \quad &\text{if} \quad u \nsim v .
		\end{aligned}\right.
	\end{equation}
	
	\begin{Prop} \label{exist}
		Let $\dot{H}$ be a totally disconnected graph of order $s $, $\mu$ be a non-zero real number, $\dot{\mathscr{B}}$ denote an arbitrary signed bipartite graph with $\dot{H}$ as a star complement for $\mu$. Then
		$\dot{\mathscr{B}}$ exists if and only if $\mu^2$ is a positive integer and $s\geq \mu^2$.
	\end{Prop}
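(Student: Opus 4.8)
The plan is to extract everything from the observation that a totally disconnected $\dot{H}$ has the zero matrix as its adjacency matrix, so that $C=0$ and hence $(\mu I-C)^{-1}=\mu^{-1}I$. In this case the bilinear form degenerates to a scalar multiple of the standard inner product, $\langle \mathbf{b}_u,\mathbf{b}_v\rangle=\mu^{-1}\sum_{i=1}^s b_{ui}b_{vi}$, and Equation (\ref{1}) is precisely the reformulation of (\ref{R}). The only fact I really need is the diagonal (``good vertex'') relation: a vertex $u$ adjoined to $\dot{H}$ is good for $\mu$ exactly when $\sum_{i=1}^s b_{ui}^2=\mu^2$.

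For the necessity direction, suppose $\dot{\mathscr{B}}$ exists. Because $\mu$ is an eigenvalue of $\dot{\mathscr{B}}$ its multiplicity $k$ satisfies $k\geq 1$, so the star set $S$ is nonempty; fix any $u\in S$. Applying the $u=v$ case of Equation (\ref{1}) gives $\sum_{i=1}^s b_{ui}^2=\mu^2$. Since $\mathbf{b}_u$ is a $(0,\pm1)$-vector of length $s$, the left-hand side merely counts the nonzero entries of $\mathbf{b}_u$, and is therefore an integer satisfying $0\leq \sum_{i=1}^s b_{ui}^2\leq s$; it is nonzero because $\mu\neq 0$. Hence $\mu^2$ is a positive integer and $\mu^2\leq s$, i.e.\ $s\geq\mu^2$.

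For the sufficiency direction, assume $\mu^2$ is a positive integer and $s\geq\mu^2$. I would exhibit an explicit $\dot{\mathscr{B}}$ of order $s+1$: adjoin a single vertex $u$ to $\dot{H}$ and join it by positive edges to any $\mu^2$ of the $s$ vertices of $\dot{H}$, so that $\mathbf{b}_u$ is the $(0,1)$-vector with exactly $\mu^2$ ones. To check that $S=\{u\}$ is a star set for $\mu$ I invoke Theorem \ref{reconstruction} with $A_S=(0)$ and $C=0$: the hypothesis that $\mu$ is not an eigenvalue of $C$ holds since $C$ has the single eigenvalue $0$ and $\mu\neq 0$, while Equation (\ref{one}) reduces to the scalar identity $\mu=\mu^{-1}\sum_{i=1}^s b_{ui}^2=\mu^{-1}\mu^2=\mu$. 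All edges of the resulting graph run between $V(\dot{H})$ and $\{u\}$, so it is bipartite, and deleting $u$ returns the totally disconnected $\dot{H}$; thus $\dot{\mathscr{B}}$ exists.

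Once the reduction $C=0$ is in place the computations are entirely routine, so I do not anticipate a genuine obstacle. The only points demanding care are the two structural observations that make the argument run: that the star set is nonempty, so at least one good vertex exists to force the arithmetic condition on $\mu^2$ in the necessity direction, and that the single-vertex extension is automatically bipartite, so that it qualifies as an instance of $\dot{\mathscr{B}}$ rather than merely a signed graph.
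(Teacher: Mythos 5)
Your proof is correct and follows exactly the route the paper intends: the paper states Proposition \ref{exist} without an explicit proof, treating it as immediate from Equation (\ref{1}) (i.e.\ $C=0$, so $\mathbf{b}_u^T\mathbf{b}_u=\mu^2$ counts the nonzero entries of a $(0,\pm1)$-vector of length $s$), which is precisely your necessity argument, and your sufficiency construction $\big(K_{1,\mu^2}\cup(s-\mu^2)K_1,\emptyset\big)$ is the obvious witness consistent with the paper's later examples. No gaps; your care about the nonempty star set and bipartiteness of the one-vertex extension is exactly what is needed.
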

	
	If $s=\mu^2$, then the following theorem is obtained in \cite{17}, which shows an inequality between $\mu^2$ and the order of $\dot{\mathscr{B}}$, and the spectrum of $\dot{\mathscr{B}}$.
	
	\begin{Thm} \cite[Theorem 3.11.]{17}
		If a signed graph $(G, \Sigma^{-})$ of order $n$ is decomposed into the star complement $\big( \mu^2 K_1, \emptyset \big)$ $(\text{for $\mu$})$ and $\big( (n-\mu^2) K_1, \emptyset \big)$, then $n\leq 2\mu^2$ and the spectrum of $(G, \Sigma^{-})$ is 
		\[ [\mu ^{n-\mu^2},0 ^{2\mu^2-n}, (-\mu) ^{n-\mu^2}].\]
	\end{Thm}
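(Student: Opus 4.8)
The plan is to read the structure of the adjacency matrix straight off the Reconstruction Theorem (Theorem \ref{reconstruction}) and then diagonalise it explicitly. Since both the star complement $(\mu^2 K_1, \emptyset)$ and the induced star set $\big((n-\mu^2)K_1, \emptyset\big)$ are edgeless, the diagonal blocks $A_S$ and $C$ both vanish, so the adjacency matrix of $(G, \Sigma^{-})$ has the block-antidiagonal form
\[ A = \left(\begin{array}{cc} 0 & B^T \\ B & 0 \end{array}\right), \]
where $B$ is a $\mu^2 \times (n-\mu^2)$ matrix over $\{0, \pm 1\}$ and the multiplicity $k$ of $\mu$ equals the star-set size $n-\mu^2$. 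Substituting $A_S = 0$ and $C = 0$ into Equation (\ref{one}) gives $\mu I = B^T(\mu I)^{-1}B = \mu^{-1}B^TB$, which yields the single identity driving the whole argument, namely $B^TB = \mu^2 I_{n-\mu^2}$.

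First I would derive the bound $n \le 2\mu^2$. Because $\mu \neq 0$, the matrix $\mu^2 I_{n-\mu^2}$ is nonsingular, so $\operatorname{rank}(B^TB) = n-\mu^2$; on the other hand $\operatorname{rank}(B^TB) = \operatorname{rank}(B) \le \min\{\mu^2,\, n-\mu^2\} \le \mu^2$. Comparing the two gives $n - \mu^2 \le \mu^2$, i.e. $n \le 2\mu^2$. Equivalently, the columns of $B$ constitute $n-\mu^2$ mutually orthogonal nonzero vectors of $\mathbb{R}^{\mu^2}$, of which there can be at most $\mu^2$.

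Next I would compute the spectrum directly from the block form. For an eigenpair $(\lambda, (x,y))$ with $x \in \mathbb{R}^{n-\mu^2}$ and $y \in \mathbb{R}^{\mu^2}$, the relation $A(x,y)^T = \lambda(x,y)^T$ reads $B^Ty = \lambda x$ and $Bx = \lambda y$; eliminating $y$ gives $B^TBx = \lambda^2 x$, that is $\mu^2 x = \lambda^2 x$. Hence any eigenvector with $x \neq 0$ forces $\lambda = \pm\mu$, while a nonzero eigenvector with $x = 0$ forces $\lambda = 0$ (using that $B$ has full column rank, so $Bx = 0$ already gives $x=0$). For $\lambda = \mu$ each $x$ determines the unique partner $y = \mu^{-1}Bx$, so that eigenspace has dimension exactly $n-\mu^2$; symmetrically $\lambda = -\mu$ gives $y = -\mu^{-1}Bx$ and dimension $n-\mu^2$. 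The remaining eigenvectors have $x = 0$ and $y \in \ker B^T$, and by rank--nullity $\dim \ker B^T = \mu^2 - (n-\mu^2) = 2\mu^2 - n$. Summing, $(n-\mu^2)+(n-\mu^2)+(2\mu^2-n) = n$, so the spectrum is precisely $[\mu^{\,n-\mu^2},\, 0^{\,2\mu^2-n},\, (-\mu)^{\,n-\mu^2}]$.

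I expect the only genuine care to be needed in the multiplicity bookkeeping: one must confirm that the assignments $x \mapsto (x,\pm\mu^{-1}Bx)$ are injective, so that they produce the full $\pm\mu$-eigenspaces rather than mere lower bounds, and that the zero-eigenspace dimension $2\mu^2 - n$ is genuinely nonnegative — which is exactly the point where the inequality $n \le 2\mu^2$ re-enters and closes the count to $n$. Beyond this, everything is an immediate consequence of $B^TB = \mu^2 I$, so no further combinatorial information about the signature or the bipartite structure is required.
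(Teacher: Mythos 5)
Your proof is correct, and it follows exactly the route this paper's own machinery suggests: the paper states this theorem as an import from \cite[Theorem 3.11]{17} without reproducing a proof, but in Section 3 it derives the same key identity $B^TB=\mu^2 I$ from Equations (\ref{1}) and (\ref{adjacency}) and implicitly relies on precisely the orthogonality/rank argument you spell out. Your derivation of $n\le 2\mu^2$ from $\operatorname{rank}(B)\le\mu^2$, and of the spectrum via $B^TBx=\lambda^2 x$ together with $\dim\ker B^T=2\mu^2-n$, is complete and the multiplicity bookkeeping closes correctly (the three eigenspaces you exhibit have dimensions summing to $n$, so they exhaust the spectrum of the symmetric matrix $A$); the only cosmetic slip is the parenthetical in your $\lambda=0$ case, where full column rank of $B$ is not needed to show $x=0$ forces $\lambda=0$ (that is immediate from $Bx=\lambda y$, $y\ne 0$), but rather for the converse direction that every $0$-eigenvector has $x=0$ — a role it does play correctly in your kernel computation.
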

	
	This theorem reveals, if a signed graph $(G, \Sigma^{-})$ of order $n$ is decomposed into the star complement $\big( s K_1,\emptyset \big)$ $(\text{for $\mu$})$ and $\big( (n-s)K_1, \emptyset \big)$, then $n\leq 2s$ and the spectrum of $(G, \Sigma^{-})$ is 
	\[ [\mu ^{n-s},0 ^{2s-n}, (-\mu) ^{n-s}].\]
	It is easy to see the signed graph $(G, \Sigma^{-})$ is bipartite.
	
	Since $\dot{\mathscr{B}}$ is bipartite and $\dot{H}$ is totally disconnected, then for every two vertices $ u,v\notin V(\dot{H})$, $u \nsim v$, otherwise there is a induced signed graph $\big( K_3,\emptyset \big)$ in $\dot{\mathscr{B}}$. Thus the following theorem is obtained.
	
	\begin{Thm}
		Let $\dot{H}$ be a totally disconnected graph of order $s$, $\dot{\mathscr{B}}$ denote an arbitrary signed bipartite graph with $\dot{H}$ as a star complement for an eigenvalue $\mu$. If the order of $\dot{\mathscr{B}}$ is $n$, then $n\leq 2s$ and the spectrum of $\dot{\mathscr{B}}$ is 
		\[ [\mu ^{n-s},0 ^{2s-n}, (-\mu) ^{n-s}].\]
	\end{Thm}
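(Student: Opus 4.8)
The plan is to use the Reconstruction Theorem to pin down the block form of the adjacency matrix of $\dot{\mathscr{B}}$, and then to read off both the bound $n\le 2s$ and the full spectrum from the off-diagonal block alone.

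First I would let $S$ be the star set for $\mu$, so that $k:=|S|$ is the multiplicity of $\mu$ and, since the star complement $\dot{H}$ has order $s$, we have $k=n-s$. Writing the adjacency matrix of $\dot{\mathscr{B}}$ in the block form of Theorem~\ref{reconstruction},
\[
A=\begin{pmatrix} A_S & B^T\\ B & C\end{pmatrix},
\]
I would identify the two diagonal blocks. Because $\dot{H}$ is totally disconnected, $C=0_{s\times s}$. Because $\dot{\mathscr{B}}$ is bipartite and hence triangle-free, the observation immediately preceding the statement gives $u\nsim v$ for all $u,v\in S$, so $S$ is independent and $A_S=0_{k\times k}$. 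Thus $A=\left(\begin{smallmatrix}0 & B^T\\ B & 0\end{smallmatrix}\right)$ with $B$ an $s\times k$ real $(0,\pm1)$-matrix.

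The key computation comes next: with $A_S=0$ and $C=0$, Equation~(\ref{one}) collapses to $\mu I_k=B^T(\mu I_s)^{-1}B=\mu^{-1}B^TB$, that is,
\[
B^TB=\mu^2 I_k.
\]
This single identity does most of the work. It forces $B$ to have full column rank $k$; since an $s\times k$ matrix has rank at most $s$, we obtain $k\le s$, equivalently $n=s+k\le 2s$. For the spectrum I would pass to $A^2=\operatorname{diag}(B^TB,\,BB^T)=\operatorname{diag}(\mu^2 I_k,\,BB^T)$. Using that $BB^T$ and $B^TB$ share the same nonzero eigenvalues with equal multiplicities, $BB^T$ has eigenvalue $\mu^2$ with multiplicity $k$ and eigenvalue $0$ with multiplicity $s-k$; hence $A^2$ has eigenvalue $\mu^2$ with multiplicity $2k$ and eigenvalue $0$ with multiplicity $s-k=2s-n$. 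Since $A$ is real symmetric, $\ker A=\ker A^2$, so $0$ is an eigenvalue of $A$ of multiplicity $2s-n$, while the remaining $2k$ eigenvalues are each $\pm\mu$.

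Finally I would split these $2k$ eigenvalues evenly. Writing $m_+,m_-$ for the multiplicities of $\mu$ and $-\mu$, the vanishing trace of $A$ gives $(m_+-m_-)\mu=0$; since $\mu\neq 0$ this yields $m_+=m_-$, and together with $m_++m_-=2k$ we get $m_+=m_-=k=n-s$, producing exactly the claimed spectrum $[\mu^{\,n-s},0^{\,2s-n},(-\mu)^{\,n-s}]$. The only genuinely nontrivial input is the identity $B^TB=\mu^2 I_k$; after that, the points to handle carefully are the transfer of nonzero eigenvalues from $B^TB$ to $BB^T$ and the two multiplicity-bookkeeping steps (that $\ker A=\ker A^2$ supplies the $0$-multiplicity, and that the trace condition forces the symmetric split between $\pm\mu$).
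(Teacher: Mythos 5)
Your proof is correct, and it takes a genuinely different route from the paper's. The paper does not argue from first principles: it observes, exactly as you do, that bipartiteness forces the star set to be independent (an edge between two star-set vertices would, via Equation~(\ref{1}), force a common neighbour in $\dot{H}$ and hence a triangle), but it then simply invokes Stani\'{c}'s result \cite[Theorem 3.11]{17}, which treats only the case $s=\mu^2$, and asserts without further argument that the same statement ``reveals'' the conclusion for arbitrary $s$. You instead derive everything inside the Reconstruction Theorem: with $C=\mathbf{0}$ and $A_S=\mathbf{0}$, Equation~(\ref{one}) gives $B^TB=\mu^2I_k$; full column rank (valid since $\mu\neq 0$) gives $k=n-s\le s$, i.e.\ $n\le 2s$; and the block identity $A^2=\operatorname{diag}(B^TB,\,BB^T)$, the transfer of nonzero eigenvalues between $B^TB$ and $BB^T$, the equality $\ker A=\ker A^2$ for symmetric $A$, and the vanishing trace together yield $[\mu^{\,n-s},0^{\,2s-n},(-\mu)^{\,n-s}]$. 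Each of these steps is sound; note also that $\mu\neq 0$ is automatic here, since $\mu$ is by definition not an eigenvalue of the star complement, whose adjacency matrix is $C=\mathbf{0}$, and that the even split between $\pm\mu$ could alternatively be read off from the spectral symmetry of signed bipartite graphs rather than the trace. What your route buys is completeness: it actually proves the extension of \cite[Theorem 3.11]{17} from $s=\mu^2$ to general $s$, which the paper only asserts by analogy, whereas the paper's route is shorter but rests on the external citation.
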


	Let 
	\begin{equation}\label{adjacency}
		A_{\dot{\mathscr{B}} }=\left(\begin{array}{cc}
			\mathbf{0}	& B^T  \\ 
			B   & \mathbf{0}
		\end{array}\right).\end{equation}
	By Equations (\ref{1}) and (\ref{adjacency}), we get $B^TB=\mu^2I$. It is essential for the problem $\mathcal{P}$ to construct such $B$. 
	
	If $\mu^2=1$, then $B$ can be an identity matrix. Thus, the maximum order of $\dot{\mathscr{B}}$ is $2s$, where $\dot{\mathscr{B}}$ is a signed bipartite graph with $\dot{H}$ as a star complement for $\pm1$.
	\begin{Thm}
		Let $\dot{H}$ be a totally disconnected graph of order $s \,  (s\geq 2)$, $\dot{\mathscr{B}} $ denote an arbitrary signed bipartite graph with $\dot{H}$ as a star complement for an eigenvalue $\mu$. If $\mu^2=1$, then the maximum order of $\dot{\mathscr{B}}$ is $2s$.
	\end{Thm}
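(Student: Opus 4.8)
The plan is to prove the claimed value in two steps: an upper bound $n\le 2s$ on the order $n$ of every such $\dot{\mathscr{B}}$, and an explicit construction attaining order exactly $2s$. The upper bound is already available: by the theorem immediately preceding this statement, any signed bipartite graph $\dot{\mathscr{B}}$ of order $n$ having $\dot{H}$ as a star complement for $\mu$ satisfies $n\le 2s$. One can also read it directly off Equation (\ref{1}): taking $u=v$ gives $\sum_{i=1}^s b_{ui}^2=\mu^2=1$, so every column $\mathbf{b}_u$ of $B$ is a $(0,\pm1)$-vector with a single nonzero entry; since $\dot{\mathscr{B}}$ is bipartite with $\dot{H}$ totally disconnected, distinct star-set vertices obey $u\nsim v$, whence $\sum_{i=1}^s b_{ui}b_{vi}=0$, forcing their nonzero entries into distinct rows. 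As $B$ has only $s$ rows, the star set has at most $s$ vertices, so $n=s+k\le 2s$.

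To attain the bound I would exhibit a graph of order exactly $2s$, namely take $B=I_s$ in (\ref{adjacency}); the resulting signed bipartite graph is $\dot{\mathscr{B}}=\big(sK_2,\emptyset\big)$. Here $B^TB=I_s=\mu^2 I$, so Equation (\ref{1}) holds with $\mu=\pm1$, and $\dot{\mathscr{B}}$ is bipartite of order $2s$. It then remains to confirm that $\dot{H}=\big(sK_1,\emptyset\big)$ is a genuine star complement for $\mu$, which I would check against Theorem \ref{reconstruction}: with $A_S=\mathbf{0}$ and $C=\mathbf{0}$, the value $\mu=\pm1$ is not an eigenvalue of $C$, and the reconstruction identity (\ref{one}) reduces to $\mu I=B^T(\mu I)^{-1}B=\mu^{-1}B^TB=\mu^{-1}\mu^2 I=\mu I$, which holds. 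Thus $\big(sK_2,\emptyset\big)$ realizes order $2s$, and combined with the upper bound the maximum order is exactly $2s$.

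I do not anticipate a serious obstacle, because the constraint $B^TB=\mu^2 I$ together with the $(0,\pm1)$ requirement essentially forces each column of $B$ to carry a single unit entry once $\mu^2=1$, pinning any extremal $B$ to a permutation-type matrix (of which $I_s$ is the simplest representative). The only step demanding care is the verification that the concrete graph $\big(sK_2,\emptyset\big)$ really has $\dot{H}$ as a star complement, that is, checking the identity (\ref{one}) and that $\mu$ avoids the spectrum of $C$, rather than merely matching the orders $n$ and $2s$.
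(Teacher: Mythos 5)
Your proposal is correct and follows essentially the same route as the paper: the upper bound $n\le 2s$ comes from the preceding theorem, and the bound is attained by taking $B=I_s$ in (\ref{adjacency}), which is exactly the paper's construction. Your explicit verification of the identity (\ref{one}) for $\big(sK_2,\emptyset\big)$ via Theorem \ref{reconstruction} only spells out a check the paper leaves implicit.
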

	
	If $s=\mu^2$, then $B$ is a square $(-1,1)$-matrix. An $n\times n$ $(-1,1)$-matrix $\mathcal{H}(n)$ with $\mathcal{H}(n)^T\mathcal{H}(n)=n I$ is called to be a \textit{Hadamard matrix}. 
	
	A Hadamard matrix is defined by Sylvester in \cite{41} and is studied further by Hadamard in \cite{42}. Hadamard conjectures that a Hadamard matrix of order $4n$ exists for every natural $n$ (\textit{Hadamard Conjecture}). This condition is necessary, and the sufficiency part is still an open problem. These Hadamard matrices are systematically studied by Paley in \cite{39}.

	It is shown the five following decompositions of a positive integer $n$ such that $n$ is the order of a Hadamard matrix:
	\begin{enumerate}
		\item[(1)] $n=2^p$, $p\geq 1$;
		\item[(2)] $n=2^p(q^h+1)$, $p\geq 2$, $h\geq 1$, $q$ is a prime;
		\item[(3)] $n=2^pq(q+1)$, $p\geq 2$, $q\equiv 3$ (mod 4), $q$ is a prime;
		\item[(4)] $n=q+1$, $q\equiv 3$ (mod 4), $q$ is a prime power;
		\item[(5)] $n=2(q+1)$, $q\equiv 1$ (mod 4), $q$ is a prime power.
	\end{enumerate}
	The cases of the first three decompositions are solved by Paley in \cite{39}, and the rest cases are solved by Ionin and Shrikhande in \cite[Corollary 4.3.26]{40}.
	
	Let $\mathcal{N}(\mathcal{H})$ be a subset of natural numbers such that every entry in $\mathcal{N}(\mathcal{H})$ is at least one of the five above decompositions. Firstly, we consider $\mu^2\in \mathcal{N}(\mathcal{H})$ and $s=\mu^2\geq 2$. 

	\begin{Lemma} \label{B1}
		Let $\mu^2 \, (\mu^2\geq 2)$ be a positive integer, $\mathbf{b}$ be a $(-1,1)$-vector of rows $\mu^2$, $B$ be a matrix consisted of $\mathbf{b}$'s such that $B^TB=\mu^2 I$. If $\mu^2\in \mathcal{N}(\mathcal{H})$, then
		the maximum number of colums of $B$ is $\mu^2$. 
	\end{Lemma}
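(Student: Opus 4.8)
The plan is to split the claim into an unconditional upper bound on the number of columns of $B$, followed by a construction that attains it under the hypothesis $\mu^2\in\mathcal{N}(\mathcal{H})$. First I would record the shape of $B$: since each column is a $(-1,1)$-vector with $\mu^2$ entries, $B$ is a $\mu^2\times m$ matrix, where $m$ is the number of columns to be bounded. The condition $B^TB=\mu^2 I$ then says that the $m\times m$ Gram matrix of the columns equals $\mu^2 I$, which is nonsingular since $\det(\mu^2 I)=(\mu^2)^m\neq 0$.

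For the upper bound I would use a rank argument. Over $\mathbb{R}$ one has $\mathrm{rank}(B^TB)=\mathrm{rank}(B)$, and since $B^TB=\mu^2 I$ is an invertible $m\times m$ matrix, its rank is $m$. On the other hand $B$ has only $\mu^2$ rows, so $\mathrm{rank}(B)\le \mu^2$. Combining these, $m=\mathrm{rank}(B)\le \mu^2$, so the number of columns of $B$ can never exceed $\mu^2$. I emphasize that this step does not use $\mu^2\in\mathcal{N}(\mathcal{H})$ at all; it holds for any positive integer $\mu^2$ and for any admissible $B$.

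It remains to exhibit a $B$ with exactly $\mu^2$ columns, and this is where the hypothesis enters. By the definition of $\mathcal{N}(\mathcal{H})$ together with the five listed decompositions, $\mu^2\in\mathcal{N}(\mathcal{H})$ guarantees a Hadamard matrix $\mathcal{H}(\mu^2)$ of order $\mu^2$. Taking $B=\mathcal{H}(\mu^2)$, each of its columns is a $(-1,1)$-vector of length $\mu^2$, and $B^TB=\mathcal{H}(\mu^2)^T\mathcal{H}(\mu^2)=\mu^2 I$, so $B$ meets every required constraint while having $\mu^2$ columns. Combined with the upper bound, this shows the maximum number of columns is exactly $\mu^2$. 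The only genuine obstacle here is the attainment step: the bound $m\le\mu^2$ is elementary, but equality is not automatic, and without a Hadamard matrix of order $\mu^2$ one cannot in general realize $\mu^2$ pairwise orthogonal $(-1,1)$-columns of length $\mu^2$. This is precisely why the hypothesis $\mu^2\in\mathcal{N}(\mathcal{H})$ is imposed, and it is the hinge on which the sharpness of the lemma rests.
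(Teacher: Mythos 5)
Your proof is correct and is essentially the argument the paper intends: the paper states Lemma \ref{B1} without an explicit proof precisely because its two ingredients are the ones you supply, namely the unconditional bound $m=\mathrm{rank}(B^TB)=\mathrm{rank}(B)\le\mu^2$ forced by the $\mu^2$ rows, and attainment by taking $B=\mathcal{H}(\mu^2)$, whose existence is exactly what membership in $\mathcal{N}(\mathcal{H})$ encodes via the Paley and Ionin--Shrikhande constructions. Your closing observation --- that the hypothesis $\mu^2\in\mathcal{N}(\mathcal{H})$ is needed only for sharpness and not for the upper bound --- is also consistent with how the paper uses the bound $n\le 2s$ elsewhere.
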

	
	Secondly, we consider that $s=\mu^2$ and $\mu^2\in 2\mathbb{Z}^{+}-1$ or $\mu^2\in 2\mathbb{Z}^{+} \backslash 4\mathbb{Z}^{+}$. Let $\otimes$ be for the Kronecker product . 
	
	\begin{Lemma}\label{B}
		Let $\mu^2 \, (\mu^2\geq 2)$ be a positive integer, $\mathbf{b}$ be a $(-1,1)$-vector of rows $\mu^2$, $B$ be a matrix consisted of $\mathbf{b}$'s such that $B^TB=\mu^2 I$. If $\mu^2\in 2\mathbb{Z}^{+}+1$ or $\mu^2\in 2\mathbb{Z}^{+} \backslash 4\mathbb{Z}^{+}$, then the maximum number of colums of $B$ is $1$ or $2$.
	\end{Lemma}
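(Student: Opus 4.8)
The plan is to recast the algebraic condition $B^{T}B=\mu^{2}I$ as a purely combinatorial question about orthogonal sign vectors, and then to bound the number of columns by two elementary divisibility arguments, one modulo $2$ and one modulo $4$. Since each column of $B$ is a $(-1,1)$-vector with exactly $\mu^{2}$ entries, every diagonal entry of $B^{T}B$ is automatically $\mu^{2}$; hence $B^{T}B=\mu^{2}I$ holds if and only if the columns of $B$ are pairwise orthogonal. Thus the maximum number of columns of $B$ equals the maximum number of pairwise orthogonal $(-1,1)$-vectors of length $\mu^{2}$, and the task reduces to analysing this quantity in the two stated cases.

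First I would dispose of the case $\mu^{2}\in 2\mathbb{Z}^{+}+1$ by a parity argument. For $\mathbf{a},\mathbf{b}\in\{-1,1\}^{\mu^{2}}$, if $k$ denotes the number of coordinates in which they agree, then $\mathbf{a}^{T}\mathbf{b}=2k-\mu^{2}\equiv\mu^{2}\ (\mathrm{mod}\ 2)$. When $\mu^{2}$ is odd this inner product is odd, hence nonzero, so no two distinct columns can be orthogonal and $B$ has at most one column. A single $(-1,1)$-vector trivially satisfies $B^{T}B=\mu^{2}$, so in this case the maximum is exactly $1$.

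Next I would treat the case $\mu^{2}\in 2\mathbb{Z}^{+}\setminus 4\mathbb{Z}^{+}$, i.e.\ $\mu^{2}\equiv 2\ (\mathrm{mod}\ 4)$, by the classical modulo-$4$ argument. Assuming three pairwise orthogonal vectors $\mathbf{a},\mathbf{b},\mathbf{c}\in\{-1,1\}^{\mu^{2}}$ exist, I expand $\sum_{i}(a_{i}+b_{i})(a_{i}+c_{i})=\sum_{i}a_{i}^{2}+\mathbf{a}^{T}\mathbf{c}+\mathbf{a}^{T}\mathbf{b}+\mathbf{b}^{T}\mathbf{c}=\mu^{2}$ using orthogonality; on the other hand each factor $a_{i}+b_{i}$ and $a_{i}+c_{i}$ lies in $\{0,\pm 2\}$, so every summand lies in $\{0,\pm 4\}$ and the sum is divisible by $4$. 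This forces $4\mid\mu^{2}$, contradicting $\mu^{2}\equiv 2\ (\mathrm{mod}\ 4)$, so $B$ has at most two columns. Since $\mu^{2}$ is even, the two orthogonal vectors $(1,\dots,1)^{T}$ and $(1,\dots,1,-1,\dots,-1)^{T}$ (with $\mu^{2}/2$ entries of each sign) realise two columns, so the maximum is exactly $2$.

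The hard part is not any single deduction — both are elementary — but rather the bookkeeping in the modulo-$4$ step, where one must simultaneously use that the three cross-products vanish by orthogonality and that each product $(a_{i}+b_{i})(a_{i}+c_{i})$ is a multiple of $4$. Combining the two cases yields that the maximum number of columns of $B$ is $1$ when $\mu^{2}$ is odd and $2$ when $\mu^{2}\equiv 2\ (\mathrm{mod}\ 4)$, i.e.\ $1$ or $2$, as claimed.
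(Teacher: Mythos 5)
Your proof is correct, and in the even case it takes a genuinely different route from the paper. For $\mu^2$ odd, both arguments rest on the same parity observation: the inner product of two $(-1,1)$-vectors of odd length is odd, hence nonzero, so no two columns can be orthogonal (the paper phrases this, somewhat cryptically, as the second column being forced to contain a $0$). For $\mu^2\equiv 2\ (\mathrm{mod}\ 4)$, however, the paper exhibits the two-column matrix $B=\mathcal{H}(2)\otimes\mathbf{j}_{2q+1}$ and then argues that no $(-1,1)$-vector of the Kronecker form $\mathbf{y}_1\otimes\mathbf{y}_2$ can be appended, whereas you prove the upper bound by the classical modulo-$4$ count: three pairwise orthogonal columns $\mathbf{a},\mathbf{b},\mathbf{c}$ would give $\sum_i(a_i+b_i)(a_i+c_i)=\mu^2$ with every summand in $\{0,\pm4\}$, forcing $4\mid\mu^2$. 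Your route is stronger at exactly the point where the paper's argument is thin: a putative third column need not have Kronecker-product structure (for $\mu^2=6$, the vector $(1,1,-1,1,-1,1)^T$ is not of the form $\mathbf{y}_1\otimes\mathbf{y}_2$ with $\mathbf{y}_1$ of length $2$ and $\mathbf{y}_2$ of length $3$), and in any case showing that one particular two-column $B$ cannot be extended does not by itself exclude some other three-column $B$; your divisibility argument rules out any three pairwise orthogonal $(-1,1)$-vectors of length $\equiv 2\ (\mathrm{mod}\ 4)$ and so settles the maximum outright. What the paper's approach buys instead is an explicit maximal example with Kronecker structure, a template reused in the constructions of Theorems \ref{C1} and \ref{C2}; your existence step --- the all-ones vector together with a balanced $\pm1$ vector --- is an equally valid, if less structured, replacement.
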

	\begin{proof}
		Let $\mu^2=2^p(2q+1)$, where $p=0,1$ and $q=1,2,3,\cdots$. If $p=0$, then $B$ is a vector. Otherwise, there is at least one $0$ in the second column of $B$, which is a contradiction that $B$ is a $(-1,1)$-matrix. Thus the maximum number of colums of $B$ is $1$. 
		
		Let
		\[ \mathcal{H}(2)=\left(\begin{array}{cc}
			1&1  \\
			1&-1
		\end{array}\right),\quad 
		B=\mathcal{H}(2) \otimes \mathbf{j}_{2q+1} ,\quad 
		\mathbf{y}=	\mathbf{y}_1  \otimes \mathbf{y}_2\]
		be a $(-1,1)$-vector such that $B^T\mathbf{y}=\mathbf{0}$, where the number of rows of $\mathbf{y}_1$ (or $\mathbf{y}_2$) is $2$ (or $2q+1$). Then 
		\[\left(\mathcal{H}(2)^T \mathbf{y}_1 \right) \otimes \left(\mathbf{j}^T \mathbf{y}_2 \right) =\mathbf{0},\]
		and $\mathcal{H}(2)^T \mathbf{y}_1=\mathbf{0}$ or $\mathbf{j}^T \mathbf{y}_2 =0$, which is a contradiction. Thus the maximum number of colums of $B$ is $2$.
	\end{proof}
	
	By Lemma \ref{B1} and \ref{B}, the following theorem is obtained.
	\begin{Thm}\label{B3}
		Let $\dot{H}$ be a totally disconnected graph of order $s$, $\dot{\mathscr{B}}$ denote an arbitrary signed bipartite graph with $\dot{H}$ as a star complement for an eigenvalue $\mu$, $n$ be the maximum order of $\dot{\mathscr{B}}$. If $s=\mu^2\geq 2$, then
		\[n=\left\{\begin{aligned}
			&s+1&\, &\text{$s\in 2\mathbb{Z}^{+}+1 $},\\
			&s+2&\, &\text{$s\in 2\mathbb{Z}^{+} \backslash 4\mathbb{Z}^{+}$},\\
			&2s&\, &s\in \mathcal{N}(\mathcal{H}).\\
		\end{aligned}
		\right.\] 
	\end{Thm}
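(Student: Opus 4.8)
The plan is to reduce the computation of the maximum order $n$ to the purely matrix-theoretic counting problem already settled in Lemmas \ref{B1} and \ref{B}, and then to read off the three cases. First I would fix the bipartite block form (\ref{adjacency}), so that $\dot{\mathscr{B}}$ is completely encoded by the matrix $B$ whose columns are the attachment vectors $\mathbf{b}_u$ of the star-set vertices $u\notin V(\dot{H})$. Because $\dot{H}$ is totally disconnected its adjacency matrix $C$ is the zero matrix, so the bilinear form of (\ref{R}) is simply $\langle \mathbf{b}_u,\mathbf{b}_v\rangle=\tfrac{1}{\mu}\mathbf{b}_u^{T}\mathbf{b}_v$; since $\dot{\mathscr{B}}$ is bipartite, any two star-set vertices satisfy $u\nsim v$, and by (\ref{1}) this is exactly the condition $B^{T}B=\mu^{2}I$. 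The star set has size $n-s$, which is the number of columns of $B$, so maximizing $n=s+(n-s)$ is the same as maximizing the number of columns of $B$ subject to $B^{T}B=\mu^{2}I$.

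The one observation that specializes the general problem to the hypothesis $s=\mu^{2}$ is this: each column $\mathbf{b}_u$ is a $(0,\pm1)$-vector of length $\mu^{2}$ whose squared norm $\mathbf{b}_u^{T}\mathbf{b}_u$ equals $\mu^{2}$, hence equals its length, which forces every entry to be $\pm1$. Thus $B$ is precisely a $(-1,1)$-matrix with $B^{T}B=\mu^{2}I$, exactly the object treated in the two lemmas. This is the step that rules out enlarging $B$ by exploiting zero entries, and it is why the clean lemmas apply here rather than the more delicate counting that the case $s>\mu^{2}$ will require later.

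With the reduction in hand the conclusion is case by case. If $s=\mu^{2}\in\mathcal{N}(\mathcal{H})$, Lemma \ref{B1} says $B$ has at most, and can attain, $\mu^{2}$ columns, so $n=s+\mu^{2}=2s$. If $s=\mu^{2}\in 2\mathbb{Z}^{+}+1$, Lemma \ref{B} caps the number of columns at $1$, giving $n=s+1$; if $s=\mu^{2}\in 2\mathbb{Z}^{+}\backslash 4\mathbb{Z}^{+}$, Lemma \ref{B} caps it at $2$, giving $n=s+2$. I would also check that the cases agree wherever they overlap: for $s=2$ one has both $2\in\mathcal{N}(\mathcal{H})$ and $2\in 2\mathbb{Z}^{+}\backslash 4\mathbb{Z}^{+}$, and both formulas return $n=4=2s$.

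The point requiring genuine care — and the main, if modest, obstacle — is the two-way passage between a maximal $(-1,1)$-matrix $B$ and an actual graph $\dot{\mathscr{B}}$ of the claimed order attaining the bound, since the lemmas produce only the matrix. To close this I would run the Reconstruction Theorem (Theorem \ref{reconstruction}) in reverse: with $A_S=\mathbf{0}$ and $C=\mathbf{0}$, identity (\ref{one}) reads $\mu I=B^{T}(\mu I)^{-1}B=\mu^{-1}B^{T}B$, which is equivalent to $B^{T}B=\mu^{2}I$, while $\mu\neq0$ is not an eigenvalue of $C=\mathbf{0}$. Hence every extremal $B$ furnished by the Hadamard or Kronecker constructions genuinely realizes $V(\dot{H})$ as a star complement for $\mu$ in a signed bipartite graph of the stated order, so the upper bounds of the lemmas are attained in every case.
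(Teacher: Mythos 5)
Your proof is correct and takes essentially the same route as the paper, whose entire proof of Theorem \ref{B3} is the one-line reduction to Lemmas \ref{B1} and \ref{B} that you carry out. You additionally make explicit two steps the paper leaves implicit --- that $s=\mu^{2}$ forces every attachment vector to be a $(-1,1)$-vector, and that each extremal matrix $B$ is realized as a genuine star set via Theorem \ref{reconstruction} with $A_{S}=C=\mathbf{0}$ --- both of which are sound.
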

	
	If $s=\mu^2+1$, then we consider whether there is a $(0,\pm 1)$-matrix $B$ such that $B^TB=(s-1)I$ or not. An $n\times n$ square matrix $\mathcal{C}(n)$ with $\mathcal{C}(n)^T \mathcal{C}(n)=(n-1) I$, in which all the diagonal entries are 0 and all off-diagonal entries are $\pm 1$, is said to be a \textit{Conference matrix}. In \cite[Proposition 4.3.5.]{40}, 
	it is shown that there is a Conference matrix $\mathcal{C}(n)$ in the case that $n-1$ is an odd prime power (the odd prime power can be 1). 
	
	Let $\mathcal{N}(\mathcal{C})$ be a set of odd prime powers. Thirdly, we consider that $\mu^2\in \mathcal{N}(\mathcal{C})$ and $s=\mu^2+1$. 
	\begin{Lemma} \label{B2}
		Let $\mu^2\in \mathcal{N}(\mathcal{C})$ with $\mu^2\geq 2$, $\mathbf{b}$ be a $(0,\pm 1)$-vector of rows $\mu^2+1$, in which the number of zero is $1$. If there is a matrix $B$ consisted of $\mathbf{b}$'s such that $B^TB=\mu^2 I$, then the maximum number of colums of $B$ is $\mu^2+1$.
	\end{Lemma}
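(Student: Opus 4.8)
The plan is to establish the claimed maximum by combining a matching upper bound with an explicit construction that attains it. First I would record the structural content of the hypothesis $B^{T}B=\mu^2 I$: its off-diagonal entries vanish, so the columns of $B$ are mutually orthogonal, and its diagonal entries equal $\mu^2$, so each column has squared Euclidean norm $\mu^2$. Since $\mu^2\geq 2$, every column is nonzero; moreover each column is a $(0,\pm1)$-vector with $\mu^2+1$ rows, and the norm condition forces exactly one zero entry per column (the $\mu^2$ nonzero $\pm1$ entries already account for the full norm $\mu^2$), which is consistent with the stated shape of $\mathbf{b}$.

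For the upper bound, I would argue purely by linear algebra. The columns are nonzero, pairwise orthogonal vectors living in $\mathbb{R}^{\mu^2+1}$, hence linearly independent; a linearly independent family in a $(\mu^2+1)$-dimensional space has at most $\mu^2+1$ members. Therefore the number of columns of $B$ is at most $\mu^2+1$, and this step uses nothing beyond $\mu^2\geq 2$.

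For achievability I would invoke the theory of Conference matrices. Since $\mu^2\in\mathcal{N}(\mathcal{C})$ is an odd prime power, by \cite[Proposition 4.3.5.]{40} there exists a Conference matrix $\mathcal{C}(\mu^2+1)$ of order $\mu^2+1$, satisfying $\mathcal{C}(\mu^2+1)^{T}\mathcal{C}(\mu^2+1)=\mu^2 I$. Taking $B=\mathcal{C}(\mu^2+1)$, each column is a $(0,\pm1)$-vector of $\mu^2+1$ rows whose unique zero lies on the diagonal, so it is precisely a vector $\mathbf{b}$ of the prescribed form, and the matrix $B$ so obtained has $\mu^2+1$ columns with $B^{T}B=\mu^2 I$. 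This realizes the upper bound, and together the two parts give that the maximum number of columns equals $\mu^2+1$.

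The argument is short because the genuinely hard existence question is delegated to the cited theory of Conference matrices; I expect the only point demanding care to be the verification that the columns of $\mathcal{C}(\mu^2+1)$ really match the prescribed shape, namely exactly one zero per column. This is immediate from the defining properties of a Conference matrix: its diagonal is zero and its off-diagonal entries are $\pm1$, so column $j$ carries its single zero in position $j$. Hence the construction is legitimate, and combining it with the orthogonality bound completes the proof.
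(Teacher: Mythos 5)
Your proof is correct: the orthogonality/dimension argument gives the upper bound $\mu^2+1$, and taking $B=\mathcal{C}(\mu^2+1)$, whose existence for $\mu^2$ an odd prime power follows from \cite[Proposition 4.3.5.]{40}, attains it with columns of exactly the prescribed shape (one zero per column, on the diagonal). The paper states Lemma \ref{B2} without an explicit proof, but the Conference-matrix discussion immediately preceding it makes clear that your route is precisely the intended one, so your argument matches the paper's approach.
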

	
	\begin{Thm}\label{B4}
		Let $\dot{H}$ be a totally disconnected graph of order $s$, $\dot{\mathscr{B}}$ denote an arbitrary signed bipartite graph with $\dot{H}$ as a star complement for an eigenvalue $\mu$, $n$ be the maximum order of $\dot{\mathscr{B}}$. If $s=\mu^2+1\geq 3$, then
		\[n=\left\{\begin{aligned}
			&\mu^2+3&\, &\text{$\mu^2\in 2\mathbb{Z}^{+} \backslash 4\mathbb{Z}^{+}$},\\
			&2\mu^2+1&\, &\mu^2\in \mathcal{N}(\mathcal{H}),\\
			&2\mu^2+2&\, &\text{$\mu^2\in \mathcal{N}(\mathcal{C})$}.\\
		\end{aligned}
		\right.\] 
	\end{Thm}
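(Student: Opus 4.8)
The plan is to recast the problem as counting the maximum number of columns of the block $B$, and then to feed each of the three cases into the appropriate one of Lemmas \ref{B}, \ref{B1}, \ref{B2}. By (\ref{adjacency}) and (\ref{1}), the columns of $B$ are the vectors $\mathbf{b}_u$ attached to the star-set vertices $u$, and they satisfy $B^TB=\mu^2 I$. Here each $\mathbf{b}_u$ is a $(0,\pm1)$-vector of length $s=\mu^2+1$ with $\mathbf{b}_u^T\mathbf{b}_u=\mu^2$, so every column carries exactly one zero entry, and distinct columns are orthogonal. Since $\dot{\mathscr{B}}$ has order $n=s+(\text{number of columns of }B)$, it suffices to determine the maximum number of such pairwise orthogonal columns, and then $n$ follows by adding $s=\mu^2+1$.

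The decisive step is a parity dichotomy governed by $\mu^2$. I would take two columns $\mathbf{b}_u,\mathbf{b}_v$ with zero entries in positions $z_u,z_v$. If $z_u=z_v$, the two vectors are simultaneously nonzero in exactly $\mu^2$ coordinates, so $\mathbf{b}_u^T\mathbf{b}_v=0$ is a vanishing sum of $\mu^2$ terms $\pm1$, possible only when $\mu^2$ is even; if $z_u\neq z_v$, they overlap in $\mu^2-1$ coordinates, and $\mathbf{b}_u^T\mathbf{b}_v=0$ is a vanishing sum of $\mu^2-1$ terms $\pm1$, possible only when $\mu^2$ is odd. Consequently, when $\mu^2$ is even all columns must share a common zero position, and deleting that row produces a $\mu^2\times k$ matrix $B'$ of $(-1,1)$-columns with $(B')^TB'=\mu^2 I$; when $\mu^2$ is odd all columns must occupy pairwise distinct zero positions, so there are at most $\mu^2+1$ of them.

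With the dichotomy in hand each case is immediate. For $\mu^2\in 2\mathbb{Z}^{+}\backslash 4\mathbb{Z}^{+}$ and for $\mu^2\in\mathcal{N}(\mathcal{H})$ the integer $\mu^2$ is even, so I pass to $B'$; Lemma \ref{B} then bounds the columns by $2$ in the first case and Lemma \ref{B1} by $\mu^2$ in the second, while reattaching the zero row realizes these bounds, giving $n=\mu^2+3$ and $n=2\mu^2+1$ respectively. For $\mu^2\in\mathcal{N}(\mathcal{C})$ the integer $\mu^2$ is an odd prime power, so the columns have distinct zeros and number at most $\mu^2+1$; Lemma \ref{B2} shows this value is attained by a Conference matrix $\mathcal{C}(\mu^2+1)$, whence $n=2\mu^2+2$. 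Throughout, the star complement has adjacency matrix $\mathbf{0}$, whose only eigenvalue is $0\neq\mu$, so each extremal $B$ does define a genuine $\dot{\mathscr{B}}$ with $\dot{H}$ as star complement for $\mu$.

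I expect the main obstacle to be the parity dichotomy itself: the observation that the parity of $\mu^2$ rigidly forces the single zeros of the columns either to coincide (the even case) or to be pairwise distinct (the odd case) is exactly what collapses the length-$(\mu^2+1)$ problem onto the already-solved length-$\mu^2$ lemmas and the Conference-matrix bound. A subsidiary point requiring care is the bookkeeping of hypotheses, namely verifying that the orders in $2\mathbb{Z}^{+}\backslash 4\mathbb{Z}^{+}$ and in $\mathcal{N}(\mathcal{H})$ are even while those in $\mathcal{N}(\mathcal{C})$ are odd, so that the correct branch of the dichotomy, and hence the correct lemma, is invoked in each case.
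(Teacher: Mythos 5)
Your proposal is correct and follows essentially the same route as the paper: your parity dichotomy on the zero positions of the columns is exactly the paper's observation that two star-set vertices with different neighbourhoods would have $\mu^2-1$ common neighbours, an odd number when $\mu^2$ is even, after which the even cases reduce by row deletion to Lemmas \ref{B} and \ref{B1} and the odd case to the Conference-matrix bound of Lemma \ref{B2}. If anything, you make explicit what the paper leaves implicit, since its proof invokes the parity argument only for $\mu^2\in 2\mathbb{Z}^{+}\backslash 4\mathbb{Z}^{+}$ and merely cites Theorem \ref{B3} for $\mu^2\in\mathcal{N}(\mathcal{H})$, where the same common-zero-row reduction is in fact needed.
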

	\begin{proof}
		If $\mu^2\in 2\mathbb{Z}^{+} \backslash 4\mathbb{Z}^{+}$, then $n\geq \mu^2+3$ by Theorem \ref{B3}.
		We suppose that there are two vertices $u$, $v$ in $\dot{\mathscr{B}}$ such that the neighbors of $u$ are different from ones of $v$. Then the number of the common neighbors between $u$ and $v$ is $\mu^2-1$. Let $\mathbf{b}_u$ is a $(0,\pm 1)$-vector of order $s$ such that $\mathbf{b}_u$ determines the neighbors of $u$ in $\dot{H}$ and the signatures of associate edges. Since $\mu^2$ is even, then $\mu^2-1$ is odd and $\mathbf{b}_u^T \mathbf{b}_v\neq \mathbf{0}$, which is a contradiction. Thus $n=\mu^2+3$.
		
		If $\mu^2\in \mathcal{N}(\mathcal{H})$, then $n=2\mu^2+1$ by Theorem \ref{B3}. If $s\in \mathcal{N}(\mathcal{C})$, then $n=2\mu^2+2$ by Lemma \ref{B2}.
	\end{proof}

	Finally, we consider that $\mu^2=p\times q$ such that $p$, $q$ are integers and there exists a $p$-order Hadamard or $(p+1)$-order Conference matrix.
	\begin{Thm}\label{C1}
		Let $\dot{H}$ be a totally disconnected graph of order $s$, $\dot{\mathscr{B}}$ denote an arbitrary signed bipartite graph with $\dot{H}$ as a star complement for an eigenvalue $\mu$, $n$ be the maximum order of $\dot{\mathscr{B}}$, where there are positive integers $p$ and $q$ such that $\mu^2=pq\geq 2$.
		\begin{enumerate}
			\item[(1)] If $p,\, q\in \mathcal{N}(\mathcal{H})$ and $cpq\leq s< (c+1)pq$, then $n=s+cpq$, where $c=1,2,3,\cdots$.
			\item[(2)] If $q\in \mathcal{N}(\mathcal{C})$, then Table \ref{n-s} holds.
			\begin{table}[H]
				\caption{The maximum order $n$ of $\dot{\mathscr{B}}$, where $\mu^2=pq$, $q\in \mathcal{N}(\mathcal{C})$ and $c=1,2,3,\cdots$. } %表格的标题
				\label{n-s} 
				\centering
				\renewcommand\arraystretch{1.5} %设置高度
				\resizebox{\linewidth}{!}{
					\begin{tabular}{|c|c|c|c|}
						\hline
						cases &$p$ & $s$  & $n-s$ \\
						\hline
						(2.1)&$2$ &  $2q$ or $2q+1$& $2$\\
						\hline
						(2.2)&$p\in \mathcal{N}(\mathcal{H})$ &  $cp(q+1)\leq s<(c+1)p(q+1)$& $cp(q+1)$\\
						\hline
						(2.3)&$p\in \mathcal{N}(\mathcal{C})$ &  $ c(p+1)(q+1)\leq s< (c+1)(p+1)(q+1)$& $c(p+1)(q+1)$\\
						\hline
				\end{tabular} }
			\end{table}
		\end{enumerate}
	\end{Thm}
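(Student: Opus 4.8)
The plan is to reduce the statement to the matrix–packing problem already isolated in this section. By (\ref{adjacency}) a signed bipartite graph $\dot{\mathscr B}$ with star complement $\dot H=(sK_{1},\emptyset)$ is encoded by its biadjacency matrix $B$, an $s\times(n-s)$ matrix over $\{0,\pm1\}$ satisfying $B^{T}B=\mu^{2}I$; equivalently, by (\ref{1}), the $n-s$ columns are pairwise orthogonal $(0,\pm1)$-vectors each of support exactly $\mu^{2}=pq$. Hence $n=s+m$, where $m$ is the largest number of such columns that fit in $\mathbb R^{s}$, so the whole theorem amounts to evaluating $m$ in each arithmetic case. I would prove the two inequalities $m\ge M$ and $m\le M$ separately, where $M$ denotes the value asserted in the corresponding line.

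For the lower bounds I would exhibit $B$ explicitly through the Kronecker product, in the spirit of Lemma \ref{B}. In case (1), since $p,q\in\mathcal N(\mathcal H)$, the product $\mathcal H(p)\otimes\mathcal H(q)$ is a Hadamard matrix of order $pq$, so its $pq$ columns are orthogonal $(-1,1)$-vectors of support $pq$. In case (2.2) I would take $\mathcal H(p)\otimes\mathcal C(q+1)$ and in case (2.3) $\mathcal C(p+1)\otimes\mathcal C(q+1)$; the mixed–product property gives $(\mathcal H(p)\otimes\mathcal C(q+1))^{T}(\mathcal H(p)\otimes\mathcal C(q+1))=(pI)\otimes(qI)=\mu^{2}I$, and similarly for (2.3), while the support of a Kronecker product of two vectors is the product of their supports, so every column has support $pq$ with entries in $\{0,\pm1\}$. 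Laying $c$ of these square blocks along disjoint sets of rows and columns keeps columns from different blocks orthogonal because their supports are disjoint; this fits whenever $c$ times the block size is at most $s$ and yields $cpq$, $cp(q+1)$, or $c(p+1)(q+1)$ columns respectively, giving the ``$\ge$'' half of each line. Case (2.1) needs no new block: with $p=2$ the relevant order $2(q+1)=2q+2$ exceeds $s\in\{2q,2q+1\}$, so it is exactly Theorems \ref{B3} and \ref{B4} for $\mu^{2}=2q\equiv2\pmod 4$.

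The upper bound is where essentially all the difficulty lies, and it is the step I expect to be the main obstacle. The goal is to show that no arrangement beats the block–diagonal one, i.e. that $m$ cannot exceed $c$ times the block size once $s$ lies below $(c+1)$ times the block size; this is precisely what forces the floor $c=\lfloor s/(\text{block size})\rfloor$ into the formula. The natural route is to peel off one block at a time: locate a set of at most one block's worth of rows carrying a complete Hadamard or Conference submatrix, delete those rows and columns, and induct on what remains, using Lemmas \ref{B1}, \ref{B}, \ref{B2} and Theorems \ref{B3}, \ref{B4} as the per–block maxima. The obstruction is that the column supports need be neither equal nor disjoint: two orthogonal support-$\mu^{2}$ vectors may overlap in only part of their supports, so the $s$ rows do not split into clean blocks of size $\mu^{2}$ on their own. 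Ruling out such mixed configurations—showing, via a parity and counting argument on the number of nonzeros each row receives together with the orthogonality relations (\ref{1}), that overlapping supports can always be rearranged into disjoint blocks without raising the column count—is the crux on which the entire formula depends.
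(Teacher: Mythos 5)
Your lower-bound half coincides with the paper's own proof: the same Kronecker blocks $\mathcal{H}(p)\otimes\mathcal{H}(q)$, $\mathcal{H}(p)\otimes\mathcal{C}(q+1)$ and $\mathcal{C}(p+1)\otimes\mathcal{C}(q+1)$, repeated block-diagonally as $I_c\otimes(\cdot)$ with zero padding, and the same reduction of case (2.1) to Theorems \ref{B3} and \ref{B4}. The genuine gap is exactly the step you flagged yourself: the upper bound $n-s\le c\cdot(\text{block size})$ for $s$ strictly between consecutive multiples of the block size is never proved --- and it cannot be, because it is false. Take $\mu^{2}=4$, $p=q=2\in\mathcal{N}(\mathcal{H})$ and $s=7$, so case (1) with $c=1$ asserts $n=s+cpq=11$. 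Let $W$ be the circulant $(0,\pm1)$-matrix of order $7$ with first row $(-1,1,1,0,1,0,0)$; a direct check of the cyclic autocorrelations at shifts $1,2,3$ (the rest follow by symmetry) shows they all vanish, so $W^{T}W=4I_{7}$: its seven columns are pairwise orthogonal weight-$4$ vectors whose supports overlap without aligning into blocks. Taking $B=W$ in Equation (\ref{adjacency}) satisfies Theorem \ref{reconstruction} with $C=\mathbf{0}$, $A_{S}=\mathbf{0}$ and $\mu=2$, so the resulting signed bipartite graph of order $14>11$ has $(7K_{1},\emptyset)$ as a star complement for $2$. (A circulant of order $6$ with first row $(0,1,1,0,1,-1)$ likewise gives order $12>10$ at $s=6$.) So the ``mixed configurations'' you identified as the crux are not an obstruction to be inducted away: they genuinely beat the block-diagonal packing, and any peeling argument of the kind you sketch must fail.

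For calibration, the paper's own proof has the same hole, only hidden: after exhibiting the constructions it derives contradictions only for putative extra columns of the factored form $\mathbf{y}=\mathbf{y}_{1}\otimes\mathbf{y}_{2}$ (or $\mathbf{x}_{1}\otimes\mathbf{x}_{2}$ with prescribed zero patterns), but an extension vector has no reason to factor; and even a complete non-extendability argument would only show that the constructed graph is \emph{maximal}, not that $n$ is the \emph{maximum} order. The statement is safe precisely at the boundary values $s=cpq$, $s=cp(q+1)$, $s=c(p+1)(q+1)$, where the construction meets the trivial bound $n\le 2s$ (this is the content of Corollary \ref{coro} as advertised in the abstract), and in case (2.1), which rests on the proved parity arguments of Theorems \ref{B3} and \ref{B4}. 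For intermediate $s$, determining $n$ amounts to finding the maximum number of columns of a rectangular weighing matrix $B$ with $B^{T}B=\mu^{2}I$ on $s$ rows, which neither your sketch nor the paper resolves; as stated, the theorem's intermediate entries are incorrect, so the right move is to weaken ``maximum'' to ``maximal'' for the constructed family or to restrict $s$ to the boundary values.
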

	\begin{proof}
		Since $p,\, q\in \mathcal{N}(\mathcal{H})$, then there are Hadamard matrices $\mathcal{H}(p)$ and $\mathcal{H}(q)$. If $s=pq$, then $B=\mathcal{H}(p) \otimes \mathcal{H}(q)$. 
		Let $\mathbf{y}=\mathbf{y}_1 \otimes \mathbf{y}_2$ be a $(-1,1)$-vector such that $B^T\mathbf{y}=\mathbf{0}$, where the number of rows of $\mathbf{y}_1$ (or $\mathbf{y}_2$) is $p$ (or $q$). Then 
		\[\left(\mathcal{H}(p)^T \mathbf{y}_1 \right) \otimes \left(\mathcal{H}(q)^T \mathbf{y}_2 \right) =\mathbf{0},\]
		and there is a zero entry in $\mathcal{H}(p)^T \mathbf{y}_1$ or $\mathcal{H}(q)^T \mathbf{y}_2$, which is a contradiction. Thus $n=s+pq$. If $cpq\leq s<(c+1)pq$, then 
		\[B^T=\left( I_c\otimes \mathcal{H}(p)^T \otimes \mathcal{H}(q)^T \quad \mathbf{0} \right),\] 
		and $n=s+cpq$. Thus the result $(1)$ holds.
		
		Since $q\in \mathcal{N}(\mathcal{C})$, then there is a Conference matrix $\mathcal{C}(q+1)$. If $s=\mu^2=2q$, then $n=2q+3=s+2$ by Theorem \ref{B3}. If $s=\mu^2+1=2q+1$, then $n= s+2$ by Theorem \ref{B4}. Thus the result $(2.1)$ holds.
		
		Since $p\in \mathcal{N}(\mathcal{H})$, then there is a Hadamard matrix $\mathcal{H}(p)$. If $s=p(q+1)$, then 
		$B=\mathcal{H}(p) \otimes \mathcal{C}(q+1)$.
		Let $\mathbf{x}_1$ be a $(0, \pm 1)$-vector of rows $p$, $\mathbf{x}_2$ be a $(0, \pm 1)$-vector of rows $q+1$, $\mathbf{x}=\mathbf{x}_1 \otimes \mathbf{x}_2$ be a vector such that $B^T\mathbf{x}=\mathbf{0}$ and there is only one $0$ either in $\mathbf{x}_1$ or in $\mathbf{x}_2$. Then \[\left(\mathcal{H}(p)^T \mathbf{x}_1 \right) \otimes \left(\mathcal{C}(q+1)^T \mathbf{x}_2 \right) =\mathbf{0}.\]
		If there is only one $0$ in $\mathbf{x}_1$, then there are no zero entries in $\mathcal{H}(p)^T \mathbf{x}_1$ or $\mathcal{C}(q+1)^T \mathbf{x}_2$, which is a contradiction.
		If there is only one $0$ in $\mathbf{x}_2$, then there is a zero entry in $\mathcal{H}(p)^T \mathbf{x}_1$ or $\mathcal{C}(q+1)^T \mathbf{x}_2$, which is a contradiction. 
		Thus $n=s+p(q+1)$. If $cp(q+1)\leq s<(c+1)p(q+1)$, then 
		\[B^T=\left( I_c\otimes \mathcal{H}(p)^T \otimes \mathcal{C}(q+1)^T \quad \mathbf{0} \right),\] 
		and $n=s+cp(q+1)$. Thus the result $(2.2)$ holds.
			
		Since $p\in \mathcal{N}(\mathcal{C})$, then there is a Conference matrix $\mathcal{C}(p+1)$. If $s=(p+1)(q+1)$, then $B=\mathcal{C}(p+1) \otimes \mathcal{C}(q+1)$. Let $\mathbf{y}_1$ be a $(0, \pm 1)$-vector of rows $p+1$, $\mathbf{y}_2$ be a $(0, \pm 1)$-vector of rows $q+1$, $\mathbf{y}=\mathbf{y}_1 \otimes  \mathbf{y}_2$ be a vector such that $B^T\mathbf{y}=\mathbf{0}$ and there is either only one $0$ in both $\mathbf{y}_1$ and $\mathbf{y}_2$ or two $0$'s in $\mathbf{y}_1$. Then 
		\[\left( \mathcal{C}(p+1)^T \mathbf{y}_1 \right) \otimes \left( \mathcal{C}(q+1)^T \mathbf{y}_2 \right) =\mathbf{0}.\]
		If there is only one $0$ in $\mathbf{y}_1$ and $\mathbf{y}_2$, then there is a zero entry in $\mathcal{C}(p+1)^T \mathbf{y}_1$ or $\mathcal{C}(q+1)^T \mathbf{y}_2$, which is a contradiction. If there are only two $0$'s in $\mathbf{y}_1$, then not all entries in $ \mathcal{C}(p+1)^T \mathbf{y}_1$ are $0$'s and there are no zero entries in $\mathcal{C}(q+1)^T \mathbf{y}_2$, which is a contradiction.
		Thus $n=s+(p+1)(q+1)$. If $c(p+1)(q+1)\leq s< (c+1)(p+1)(q+1)$, then \[B^T=( I_c\otimes \mathcal{C}(p+1)^T \otimes \mathcal{C}(q+1)^T \quad \mathbf{0}) ,\] 
		and $n=s+c(p+1)(q+1)$. Thus the result $(2.3)$ holds.
	\end{proof}
	
	\begin{coro}\label{coro}
		Let $\dot{H}$ be a totally disconnected graph of order $s$, $\dot{\mathscr{B}}$ denote an arbitrary signed bipartite graph with $\dot{H}$ as a star complement for an eigenvalue $\mu$ $(\mu^2\geq 2)$, $n$ be the maximum order of $\dot{\mathscr{B}}$.
		\begin{enumerate}
			\item[(1)]  If $\mu^2\in \mathcal{N}(\mathcal{H})$ and $c\mu^2\leq s< (c+1)\mu^2$, then $n=s+c\mu^2$.
			\item[(2)] If $\mu^2\in \mathcal{N}(\mathcal{C})$ and $c(\mu^2+1)\leq s<(c+1)(\mu^2+1)$, then $n=s+c(\mu^2+1)$.
		\end{enumerate}
	\end{coro}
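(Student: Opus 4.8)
The plan is to obtain both cases as the two degenerate specializations of Theorem \ref{C1} in which one factor of the product $\mu^2=pq$ equals $1$. In each case the $1\times 1$ matrix $(1)$ serves as the trivial Hadamard matrix $\mathcal{H}(1)$ (indeed $(1)^T(1)=I_1$), so the Kronecker-product block appearing in the proof of Theorem \ref{C1} collapses onto its single nontrivial leg and the whole argument reduces to tiling copies of one Hadamard or one Conference matrix. I would therefore simply check that the hypotheses, the index ranges, and the output formulas of Theorem \ref{C1} specialize correctly, rather than re-prove anything.

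For part (1), I would take $p=\mu^2$ and $q=1$, so that $pq=\mu^2$ and $q=1$ admits the trivial Hadamard matrix $\mathcal{H}(1)=(1)$ while $p=\mu^2\in\mathcal{N}(\mathcal{H})$ by hypothesis. The construction of Theorem \ref{C1}(1) then produces $\mathcal{H}(p)\otimes\mathcal{H}(q)=\mathcal{H}(\mu^2)$ and, for $c$ blocks, $B^T=\bigl(I_c\otimes\mathcal{H}(\mu^2)^T\ \ \mathbf{0}\bigr)$. The range $cpq\le s<(c+1)pq$ becomes $c\mu^2\le s<(c+1)\mu^2$ and the value $n=s+cpq$ becomes $n=s+c\mu^2$, which is exactly the assertion.

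For part (2), I would instead take $p=1$ and $q=\mu^2$, so that $q=\mu^2\in\mathcal{N}(\mathcal{C})$ by hypothesis and the Hadamard factor is again the trivial $\mathcal{H}(1)=(1)$; this places us in case $(2.2)$ of Theorem \ref{C1}. The extremal block degenerates to $\mathcal{H}(1)\otimes\mathcal{C}(\mu^2+1)=\mathcal{C}(\mu^2+1)$, so $B^T=\bigl(I_c\otimes\mathcal{C}(\mu^2+1)^T\ \ \mathbf{0}\bigr)$. Since $p(q+1)=\mu^2+1$, the range $cp(q+1)\le s<(c+1)p(q+1)$ becomes $c(\mu^2+1)\le s<(c+1)(\mu^2+1)$ and $n-s=cp(q+1)=c(\mu^2+1)$, giving the stated formula.

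The only place that needs attention — the nearest thing to an obstacle — is to confirm that setting a factor equal to $1$ does not invalidate the orthogonality and impossibility arguments embedded in the proof of Theorem \ref{C1}. This is routine: with the $(1)$ leg present, the Kronecker identity $(\mathcal{H}(p)^T\mathbf{y})\otimes(\mathcal{H}(q)^T\mathbf{y}')=\mathbf{0}$ (and its Conference analogue) reduces to the single statement that $\mathcal{H}(\mu^2)^T\mathbf{z}$, respectively $\mathcal{C}(\mu^2+1)^T\mathbf{z}$, cannot vanish for a nonzero $(0,\pm1)$-vector $\mathbf{z}$ of the prescribed support — which is precisely the invertibility of a single Hadamard, respectively Conference, matrix already exploited in Lemma \ref{B1} and Lemma \ref{B2}. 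Hence both the construction realizing $c\mu^2$ (respectively $c(\mu^2+1)$) columns of $B$ and the impossibility of adjoining one more column are inherited verbatim, and the corollary follows.
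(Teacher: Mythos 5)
Your proposal is correct and is essentially the paper's own route: the paper states Corollary \ref{coro} without a separate proof, as the degenerate specialization of Theorem \ref{C1} in which one factor of $\mu^2=pq$ is trivial (tiling $c$ copies of $\mathcal{H}(\mu^2)$, respectively $\mathcal{C}(\mu^2+1)$, via $B^T=\bigl(I_c\otimes\mathcal{H}(\mu^2)^T\ \ \mathbf{0}\bigr)$ or $B^T=\bigl(I_c\otimes\mathcal{C}(\mu^2+1)^T\ \ \mathbf{0}\bigr)$), exactly as you describe. Your closing check is in fact the one point worth spelling out, since $1\notin\mathcal{N}(\mathcal{H})$ under the paper's literal definition, so the hypotheses of Theorem \ref{C1} are not verbatim satisfied and one must observe, as you do, that the only role of that hypothesis is the existence of the (trivial) Hadamard matrix $\mathcal{H}(1)=(1)$, after which the construction and the impossibility arguments carry over unchanged.
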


	\begin{Thm}\label{C2}
		Let $\dot{H}$ be a totally disconnected graph of order $s$, $\dot{\mathscr{B}}$ denote an arbitrary signed bipartite graph with $\dot{H}$ as a star complement for an eigenvalue $\mu$, $n$ be the maximum order of $\dot{\mathscr{B}}$, where there are positive integers $p$ and $q$ ($p> q$) such that $\mu^2=p q\geq 2 $.
		\begin{enumerate}
			\item[(1)] If $p\in \mathcal{N}(\mathcal{H})$, $q\in \mathcal{N}(\mathcal{C})$ and $pq\leq s< p(q+1)$, then $s+p\leq n\leq 2s$.
			\item[(2)] If $p,\, q\in \mathcal{N}(\mathcal{C})$ and $pq\leq s< (p+1)q$, then $s+1\leq n\leq 2s$.
			\item[(3)] If $p,\, q\in \mathcal{N}(\mathcal{C})$ and $(p+1)q\leq s< (p+1)(q+1)$, then $s+p+1\leq n\leq 2s$.
		\end{enumerate}
	\end{Thm}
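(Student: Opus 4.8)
The plan is to reduce everything, exactly as in the proof of Theorem \ref{C1}, to the orthogonality condition $B^{T}B=\mu^{2}I$ and then to realise the three lower bounds by explicit Kronecker constructions, using the already-established inequality $n\le 2s$ for the upper bounds. Recall that for $\dot{\mathscr B}$ the blocks $A_{S}$ and $C$ in Theorem \ref{reconstruction} are both zero: the star complement $\dot H$ is totally disconnected, and by bipartiteness so is the subgraph induced by the star set. Hence with $A_{\dot{\mathscr B}}$ written as in (\ref{adjacency}), Equation (\ref{one}) collapses to $B^{T}B=\mu^{2}I$ with $\mu^{2}=pq$. Consequently a signed bipartite graph $\dot{\mathscr B}$ of order $n=s+k$ exists precisely when there is a $(0,\pm1)$-matrix $B$ with $s$ rows and $k$ pairwise orthogonal columns, each carrying exactly $pq$ nonzero entries; by Theorem \ref{reconstruction} any such $B$ (together with $\mu\neq 0$, which holds since $pq\ge 2$) does produce a valid $\dot{\mathscr B}$. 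The upper bound $n\le 2s$ in all three cases is the bound established earlier in this section, equivalently the fact that $k$ orthogonal vectors in $\mathbb{R}^{s}$ force $k\le s$. It therefore remains only to build, in each case, a $B$ with the claimed number of columns.

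The building mechanism is uniform: given an $m\times k$ block $B_{0}$ with $B_{0}^{T}B_{0}=pq\,I_{k}$ and $m\le s$, appending $s-m$ zero rows yields an $s\times k$ $(0,\pm1)$-matrix with the same Gram matrix $pq\,I_{k}$, so padding is cost-free. In case (1) I would take $B_{0}=\mathcal H(p)\otimes\mathbf{j}_{q}$, which has $pq\le s$ rows and $p$ columns and satisfies
\[\bigl(\mathcal H(p)\otimes\mathbf{j}_{q}\bigr)^{T}\bigl(\mathcal H(p)\otimes\mathbf{j}_{q}\bigr)=\bigl(\mathcal H(p)^{T}\mathcal H(p)\bigr)\otimes\bigl(\mathbf{j}_{q}^{T}\mathbf{j}_{q}\bigr)=(p\,I_{p})\otimes q=pq\,I_{p};\]
padding up to $s$ rows gives $k=p$, hence $n\ge s+p$. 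In case (3), since $s\ge (p+1)q$, I would instead use $B_{0}=\mathcal C(p+1)\otimes\mathbf{j}_{q}$, which has $(p+1)q\le s$ rows and $p+1$ columns, and the same computation gives $(p\,I_{p+1})\otimes q=pq\,I_{p+1}$, whence $k=p+1$ and $n\ge s+p+1$. In case (2) no such two-factor block fits the row budget: both $\mathcal C(p+1)\otimes\mathbf{j}_{q}$, needing $(p+1)q>s$ rows, and $\mathbf{j}_{p}\otimes\mathcal C(q+1)$, needing $p(q+1)=pq+p>pq+q=(p+1)q>s$ rows by $p>q$, overflow; so I would only guarantee a single column, namely any $(0,\pm1)$-vector with $pq\le s$ nonzero entries, giving $k\ge 1$ and $n\ge s+1$.

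The step I expect to carry the real weight is the selection and feasibility check of the reduced block in each regime, rather than any computation. The ``ideal'' fully two-factor products $\mathcal H(p)\otimes\mathcal C(q+1)$ and $\mathcal C(p+1)\otimes\mathcal C(q+1)$ of Theorem \ref{C1} need $p(q+1)$ and $(p+1)(q+1)$ rows respectively, both exceeding $s$ in the ranges considered here, so one must replace the unavailable factor by the all-ones vector $\mathbf{j}$ and then check, against the case hypotheses and the standing assumption $p>q$, exactly which reduced product still fits within $s$ rows. This is also what forces the results to be one-sided: I do not expect to close the gap up to $2s$, and in fact the lower bound is sometimes already attained at the left endpoint. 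For instance, in case (2) at $s=pq$ every column must be a full $(\pm1)$-vector of length $pq$, which is odd since $p,q\in\mathcal N(\mathcal C)$ are odd prime powers; two such vectors have odd, hence nonzero, inner product, so $k=1$ and $n=s+1$ exactly, which is precisely why case (2) can promise no more than $s+1$. Combining each construction with the inequality $n\le 2s$ then yields the three stated ranges.
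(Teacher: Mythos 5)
Your proposal is correct and takes essentially the same route as the paper's proof: the same reduction to $B^{T}B=\mu^{2}I$, the same Kronecker blocks $\mathcal{H}(p)\otimes\mathbf{j}_{q}$ and $\mathcal{C}(p+1)\otimes\mathbf{j}_{q}$ (padded with zero rows) for the lower bounds in (1) and (3), a single column --- equivalently Theorem \ref{B3} at $s=pq$ with $pq$ odd --- for (2), and the general bound $n\leq 2s$ for the upper bounds. The only difference is that you omit the paper's non-extension arguments about product-form vectors $\mathbf{x}_{1}\otimes\mathbf{x}_{2}$, which are not needed for the stated inequalities in any case.
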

	\begin{proof}
		Since $p\in \mathcal{N}(\mathcal{H})$, $q\in \mathcal{N}(\mathcal{C})$, then there is a Hadamard matrix $\mathcal{H}(p)$ and a Conference matrix $\mathcal{C}(q+1)$.
		If $s=p q$, then $B=\mathcal{H}(p)\otimes \mathbf{j}_{q}$. Let $\mathbf{x}=\mathbf{x}_1 \otimes \mathbf{x}_2$ be a $(-1,1)$-vector such that $B^T\mathbf{x}=\mathbf{0}$, where the number of rows of $\mathbf{x}_1$ (or $\mathbf{x}_2$) is $p$ (or $q$). Then 
		\[\left(\mathcal{H}(p)^T \mathbf{x}_1 \right) \otimes \left(\mathbf{j}^T \mathbf{x}_2 \right) =\mathbf{0},\]
		and there is a zero entry in $\mathcal{H}(p)^T \mathbf{x}_1$ or $\mathbf{j}^T \mathbf{x}_2 =0$, which is a contradiction. Thus $s+p\leq n\leq 2s$.
		
		If $s=\mu^2=p q$ and $p,\, q\in \mathcal{H}(\mathcal{C})$, then $n=s+1$ by Lemma \ref{B3}. If $pq\leq s<(p+1)q$, then $s+1\leq n\leq 2s$.
		
		Since $p\in \mathcal{N}(\mathcal{C})$, then there is a Conference matrix $\mathcal{C}(p+1)$. If $s=(p+1)q$, then $B=\mathcal{C}(p+1)\otimes \mathbf{j}_{q}$. Let $\mathbf{y}_1$ be a $(0,\pm 1)$-vector of rows $p+1$, $\mathbf{y}_1$ be a $(0,\pm 1)$-vector of rows $q$, $\mathbf{y}=\mathbf{y}_1 \otimes \mathbf{y}_2$ be a vector such that $B^T\mathbf{y}=\mathbf{0}$ and there is only one $0$ either in $\mathbf{y}_1$ or in $\mathbf{y}_2$. Then 
		\[\left(\mathcal{C}(p+1)^T \mathbf{y}_1 \right) \otimes \left( \mathbf{j}^T \mathbf{y}_2 \right) =\mathbf{0}.\]
		If there is only one $0$ in $\mathbf{y}_1$, then there is a zero entry in $\mathcal{C}(p+1)^T \mathbf{y}_1$ and $\mathbf{j}^T \mathbf{y}_2 \neq 0$, which is a contradiction. If there is only one $0$ in $\mathbf{y}_2$, then there are no zero entries in $\mathcal{C}(p+1)^T \mathbf{y}_1$ and $\mathbf{j}^T \mathbf{y}_2 \neq 0$, which is a contradiction. Thus $s+p+1\leq n\leq 2s$. 
	\end{proof}

	\section{The maximal signed bipartite graph $ \dot{\mathscr{B}}_m$}	
	Let $(G_1, \Sigma_1^{-})$ and $(G_2, \Sigma^{-}_2)$ be two signed graphs. If there exists a permutation $(0,\,1)$-matrix $P$ such that $A_{(G_2, \Sigma^{-}_2)}=P^{-1} A_{(G_1, \Sigma^{-}_1)} P$, then $(G_1, \Sigma^{-}_1)$ and $(G_2, \Sigma^{-}_2)$ are said to be \textit{isomorphic}. Similarly, if there is a diagonal $(-1,1)$-matrix $D$ such that $A_{ (G_2, \Sigma^{-}_2) }=D^{-1} A_{ (G_1, \Sigma^{-}_1) } D$, then $(G_1, \Sigma^{-}_1)$ and $(G_2, \Sigma^{-}_2)$ are said to be \textit{switching equivalent}. Isomorphism and switching equivalence are equivalence relations preserving the eigenvalues. 
	
	If $(G_1, \Sigma^{-}_1)$ is isomorphic to a switching equivalence of $(G_2, \Sigma^{-}_2)$, then $(G_1, \Sigma^{-}_1)$ and $(G_2, \Sigma^{-}_2)$ are said to be \textit{switching isomorphic}, denoted by $(G_1, \Sigma^{-}_1)\cong^s (G_2, \Sigma^{-}_2)$. Switching isomorphism is also an equivalence relation preserving the eigenvalues.    
	
	Let $\dot{H}$ be a totally disconnected graph of order $s$, $\dot{\mathscr{B}}_m$ is a maximal signed bipartite graph with $\dot{H}$ as a star complement for an eigenvalue $\mu$.
	
	\begin{Thm}
		Let $\dot{H}$ be a totally disconnected graph of order $s$, $\dot{\mathscr{B}}_m$ denote the maximal signed bipartite graph with $\dot{H}$ as a star complement for an eigenvalue $\mu$. If $\mu^2=1$, then $\dot{\mathscr{B}}_m \cong^s(sK_{2},\emptyset)$.
	\end{Thm}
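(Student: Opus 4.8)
The plan is to use maximality to force the block $B$ in (\ref{adjacency}) to be a signed permutation matrix, so that $\dot{\mathscr{B}}_m$ is a signed perfect matching, and then to switch every edge to be positive. To begin, I would record the data available at the maximal order: when $\mu^2=1$ the maximum order is $2s$, so $B$ is a square $s\times s$ matrix and, by (\ref{1}), $B^TB=\mu^2I=I$. Since $\dot{\mathscr{B}}_m$ is bipartite with $\dot{H}$ totally disconnected, its star set is also an independent set, and hence the whole adjacency matrix is $\left(\begin{smallmatrix}\mathbf{0}&B^T\\ B&\mathbf{0}\end{smallmatrix}\right)$ with both diagonal blocks zero.

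Next I would analyse the columns $\mathbf{b}_u$ of $B$. Each $\mathbf{b}_u$ is a $(0,\pm1)$-vector, and the diagonal case of (\ref{1}) gives $\mathbf{b}_u^T\mathbf{b}_u=\mu^2=1$; as this sum merely counts the nonzero entries, each $\mathbf{b}_u$ has exactly one nonzero entry, equal to $\pm1$. The mutual non-adjacency of distinct star-set vertices yields $\mathbf{b}_u^T\mathbf{b}_v=0$ for $u\neq v$, so distinct columns place their single nonzero entry in distinct rows. Thus the $s$ columns are pairwise orthogonal signed unit vectors of $\mathbb{R}^s$, which forces every row to be occupied exactly once; consequently $B$ carries exactly one nonzero $(\pm1)$ entry in each row and each column, i.e.\ $B$ is a signed permutation matrix.

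It then follows that every vertex of $\dot{\mathscr{B}}_m$ has degree $1$, so the underlying graph is the perfect matching $sK_2$ and $\dot{\mathscr{B}}_m$ is this matching equipped with some signature. To finish, I would switch: choosing a diagonal $(-1,1)$-matrix $D$ that negates one endpoint of each negative edge flips precisely that edge, and as the edges are vertex-disjoint these choices do not interfere, so $D$ turns every edge positive. The resulting positive perfect matching is isomorphic to $(sK_2,\emptyset)$, and therefore $\dot{\mathscr{B}}_m\cong^s(sK_2,\emptyset)$.

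The only step needing genuine care is the counting argument that upgrades ``one nonzero entry per column'' to ``$B$ is a signed permutation matrix''; once $B$ is known to be monomial, the matching structure and the vertex-disjointness that makes the switching independent across edges are immediate, so I do not expect a real obstacle beyond this bookkeeping.
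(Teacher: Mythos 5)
Your proposal is correct and follows essentially the same route as the paper's proof: the paper likewise reads off from Equation (\ref{1}) with $\mu^2=1$ that each star-set vertex has exactly one neighbour in $\dot{H}$ and that no two star-set vertices share a neighbour, which is exactly your statement that $B$ is a signed permutation matrix. The only difference is presentational --- you phrase the argument in matrix language ($B^TB=I$ forcing $B$ monomial) and spell out the switching that removes the negative edges, a step the paper leaves implicit.
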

	\begin{proof}
		Let $S$ be a vertex subset of $\dot{\mathscr{B}} _m$ such that $\dot{\mathscr{B}}_m-S=\dot{H}$. Since $\mu=\pm 1$, then by Equation (\ref{1}), we obtain that each vertex in $S$ is adjacent with only one vertex in $\dot{H}$, and each vertex in $\dot{H}$ does not have common neighbors in $S$. Thus $\dot{\mathscr{B}}_m \cong^s (sK_{2},\emptyset)$. 
	\end{proof}	
	
	Now we consider $s=\mu^2\geq 2$ or $s=\mu^2+1$.
	
	\begin{Ex}
		Let $\dot{H}$ be a totally disconnected graph of order $\mu^2$.
		If $\mu^2=2$, then $(K_{2,2},E(K_2) )$ is the maximal signed bipartite graph with $\dot{H}$ as a star complement for $\pm \sqrt{2}$. If $\mu^2=3$, then $\big( K_{1,3},\emptyset \big)$ is the maximal signed bipartite graph with $\dot{H}$ as a star complement for $\pm \sqrt{3}$. If $\mu^2=4$, then $\big( K_{4,4},E(C_6) \big)$ is the maximal signed bipartite graph with $\dot{H}$ as a star complement for $\pm 2$.
	\end{Ex}
	
	A simple graph is said to be \textit{biregular} if its vertex degrees assume exactly two different values. Let $K_{n,n}\backslash n K_2$ be a simple graph obtained by deleting $n$ edges without common vertices in $K_{n,n}$. 
	
	\begin{Ex}
		Let $\dot{H}$ be a totally disconnected graph of order $\mu^2+1$.
		If $\mu^2=2$, then $(K_{1,3}\cup K_1,\emptyset)$ is the maximal signed bipartite graph with $\dot{H}$ as a star complement for $\pm \sqrt{2}$. If $\mu^2=3$, then $\big( K_{4,4} \backslash {4K_2}, E(P_4\cup K_2) \big)$ is the maximal graph with $\dot{H}$ as a star complement for $\pm \sqrt{3}$, where $P_4$ is a path of length $4$. If $\mu^2=5$, then $\big( K_{6,6} \backslash {6K_2}, E(BR) \big)$ is the maximal graph with $\dot{H}$ as a star complement for $\pm \sqrt{5}$, where the simple graph $BR$ (see Figure \ref{Sigma}) is biregular.
	\end{Ex}
	
	\begin{figure}[h]
		\centering
		\begin{tikzpicture}[scale=0.4]		
			\node[circle, fill=black!60,inner sep=1pt] (a1) at (0,0) {};
			\node[circle, fill=black!60,inner sep=1pt] (a2) at (2,0) {};
			\node[circle, fill=black!60,inner sep=1pt] (a3) at (4,0) {};
			\node[circle, fill=black!60,inner sep=1pt] (a4) at (6,0) {};
			\node[circle, fill=black!60,inner sep=1pt] (a5) at (8,0) {};
			\node[circle, fill=black!60,inner sep=1pt] (a6) at (10,0) {};
			\node[circle, fill=black!60,inner sep=1pt] (b1) at (0,4) {};
			\node[circle, fill=black!60,inner sep=1pt] (b2) at (2,4) {}; 
			\node[circle, fill=black!60,inner sep=1pt] (b3) at (4,4) {}; 
			\node[circle, fill=black!60,inner sep=1pt] (b4) at (6,4) {}; 
			\node[circle, fill=black!60,inner sep=1pt] (b5) at (8,4) {}; 
			\node[circle, fill=black!60,inner sep=1pt] (b6) at (10,4) {}; 
			\draw[line width=0.8pt,-] (a1) --  (b2);
			\draw[line width=0.8pt,-] (a1) --  (b5);
			\draw[line width=0.8pt,-] (a1) --  (b6);
			\draw[line width=0.8pt,-] (a2) --  (b1);
			\draw[line width=0.8pt,-] (a2) --  (b4);
			\draw[line width=0.8pt,-] (a2) --  (b6);
			\draw[line width=0.8pt,-] (a3) --  (b4);
			\draw[line width=0.8pt,-] (a3) --  (b5);
			\draw[line width=0.8pt,-] (a3) --  (b6);
			\draw[line width=0.8pt,-] (a4) --  (b1);
			\draw[line width=0.8pt,-] (a4) --  (b5);
			\draw[line width=0.8pt,-] (a5) --  (b2);
			\draw[line width=0.8pt,-] (a5) --  (b4);	
		\end{tikzpicture}
		\caption{The biregular graph $BR$ is a induced subgraph in $K_{6,6}\backslash {6K_2}$.}
		\label{Sigma}
	\end{figure}
	
	If $s=\mu^2\in \mathcal{N}(\mathcal{H})$, then the order of $\dot{\mathscr{B}}_m$ is $2s$ by Theorem \ref{B3}, and $\dot{\mathscr{B}}_m$ has only two opposite eigenvalue. If $s=\mu^2+1$, then the maximum order of $\dot{\mathscr{B}}_m$ is $2s+2$ by Theorem \ref{B4}, and $\dot{\mathscr{B}}_m$ also has only two opposite eigenvalue. 
	
	In \cite{1}, Ramezani proves that the signed graphs with just two distinct eigenvalues are signed strongly regular graphs. In \cite{43}, Stani$\acute{\text{c} }$ shows certain structural and spectral properties of signed strongly regular graphs, which is also bipartite.
	
	Let the signature of a walk be determined by the product of signatures of edges, $u$ and $v$ be two vertices in a signed graph, $w_2(u,v)$ be the difference between the numbers of positive and negative walks traversing along $2$ edges between $u$ and $v$. A signed graph $(G, \Sigma^{-})$ is said to be strongly regular (for short, $SRG^s (n,r,a,b,c)$) whenever it is regular and satisfies the following four conditions: 
	\begin{enumerate}
		\item[(1)] $(G, \Sigma^{-})$ is neither homogeneous complete nor totally disconnected;
		\item[(2)] there exists $a \in\mathbb{Z}$ such that $w_2(u,v) = a$, for all $u\stackrel{+}{\sim} v$;
		\item[(3)] there exists $b \in\mathbb{Z}$ such that $w_2(u,v) = b$, for all $u\stackrel{-}{\sim} v$;
		\item[(4)] there exists $c\in\mathbb{Z}$ such that $w_2(u,v) = c$, for all $u\nsim v$.
	\end{enumerate}
	Thus 
	\begin{equation} \label{SRG}
		A_{(G, \Sigma^{-})}^2=\frac{a}{2}( A_{(G, \Sigma^{-})} +A_{G})-\frac{b}{2}( A_{(G, \Sigma^{-})}-A_{G} )+
		cA_{ \overline{G} }+rI,
	\end{equation}
	where $\overline{G}$ is for the complement of a simple graph $G$. We use the notation $SRG^s(2n)$ to denote as $SRG^s(2n,n,0,0,0)$.
	
	\begin{Thm}\label{G1}
		Let $\dot{H}$ be a totally disconnected graph of order $s$, $\dot{\mathscr{B}}_m$ denote the maximal signed bipartite graph with $\dot{H}$ as a star complement for an eigenvalue $\mu$, where $s=\mu^2\geq 2$.
		\begin{enumerate}
			\item[(1)] If $s\in 2\mathbb{Z}^{+}+1$, then $\dot{\mathscr{B}}_m \cong^s (K_{1,s}, \, \emptyset)$.
			\item[(2)] If $s\in 2\mathbb{Z}^{+} \backslash 4\mathbb{Z}^{+}$, then $\dot{\mathscr{B}}_m \cong^s \big( K_{2,s}, \, E(K_{1,\frac{s}{2}}) \big)$.
			\item[(3)] If $s\in \mathcal{N}(\mathcal{H})$, then 
			$\dot{\mathscr{B}}_m \cong^s SRG^{s}(2s)$.
		\end{enumerate} 
	\end{Thm}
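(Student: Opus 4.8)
The plan is to read the structure of $\dot{\mathscr{B}}_m$ directly off the matrix $B$ in the block form (\ref{adjacency}), where $B^{T}B=\mu^{2}I=sI$ and $B$ has exactly $n-s$ columns. By Theorem \ref{B3}, in the maximal case the number of columns is $1$, $2$, or $s$ according as $s\in 2\mathbb{Z}^{+}+1$, $s\in 2\mathbb{Z}^{+}\backslash 4\mathbb{Z}^{+}$, or $s\in\mathcal{N}(\mathcal{H})$. Each column $\mathbf{b}_u$ of $B$ satisfies $\mathbf{b}_u^{T}\mathbf{b}_u=s$ and has length $s$ with entries in $\{0,\pm1\}$, so every entry of $B$ equals $\pm1$; hence each star-set vertex is joined to all $s$ vertices of $\dot{H}$, and the underlying graph is $K_{1,s}$, $K_{2,s}$, or $K_{s,s}$ respectively. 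Throughout I use that a switching (diagonal $\pm1$ conjugation) sends $B$ to $D_{H}BD_{S}$, that relabelling $V(\dot{H})$ is an isomorphism, and that both operations are absorbed by $\cong^{s}$.

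For (1), $B$ is a single $(-1,1)$-column, so $\dot{\mathscr{B}}_m$ has underlying graph $K_{1,s}$. Switching at each vertex of $\dot{H}$ whose edge to the unique star-set vertex is negative turns every edge positive, giving $\dot{\mathscr{B}}_m\cong^{s}(K_{1,s},\emptyset)$. For (2), $B=(\mathbf{b}_u\ \mathbf{b}_v)$ consists of two orthogonal $(-1,1)$-columns and the underlying graph is $K_{2,s}$. First I normalise $\mathbf{b}_u$ to the all-ones vector by switching at the rows where it is negative; since a switch multiplies both entries of a row by the same sign, the inner product $\mathbf{b}_u^{T}\mathbf{b}_v$ is unchanged, so $\mathbf{j}^{T}\mathbf{b}_v=0$ and $\mathbf{b}_v$ has exactly $s/2$ entries $+1$ and $s/2$ entries $-1$ (here $s$ even is essential). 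Relabelling $V(\dot{H})$ so the negative entries of $\mathbf{b}_v$ come last puts $B$ in the canonical form $\mathcal{H}(2)\otimes\mathbf{j}_{s/2}$; the negative edges are then precisely the $s/2$ edges joining $v$ to its negative neighbours, which form a $K_{1,s/2}$. Hence $\dot{\mathscr{B}}_m\cong^{s}\big(K_{2,s},E(K_{1,s/2})\big)$.

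For (3), $B$ is a square $s\times s$ matrix with $B^{T}B=sI$, i.e.\ a Hadamard matrix, so also $BB^{T}=sI$ and
\[
A_{\dot{\mathscr{B}}_m}^{2}=\begin{pmatrix} B^{T}B & \mathbf{0} \\ \mathbf{0} & BB^{T}\end{pmatrix}=sI_{2s}.
\]
Reading the $(u,v)$ entry of $A^{2}$ as the walk count $w_2(u,v)$ gives $w_2(u,u)=s$ and $w_2(u,v)=0$ for $u\neq v$; moreover the underlying graph $K_{s,s}$ $(s\geq2)$ is $s$-regular and is neither homogeneous complete nor totally disconnected. By the defining relation (\ref{SRG}) this forces $r=s$ and $a=b=c=0$, so $\dot{\mathscr{B}}_m$ is an $SRG^{s}(2s,s,0,0,0)=SRG^{s}(2s)$.

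The matrix manipulations are routine; the genuine obstacle is the normalisation in (1) and (2), and in particular that in (2) switching at a vertex of $\dot{H}$ flips its edges to \emph{both} $u$ and $v$ simultaneously, so one cannot independently adjust the two columns. I expect to resolve this by normalising one column completely first and letting the preserved orthogonality pin down the sign balance of the second, after which only a relabelling remains; this same argument delivers the uniqueness up to $\cong^{s}$ implicit in the statements of (1) and (2).
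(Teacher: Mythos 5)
Your proposal is correct and takes essentially the same route as the paper: cases (1) and (2) follow from the column bounds of Theorem \ref{B3} (ultimately Lemma \ref{B}) plus the observation that all entries of $B$ are $\pm 1$, and case (3) from $A_{\dot{\mathscr{B}}_m}^2=sI$ together with Equation (\ref{SRG}), exactly as in the paper. The only difference is that you carry out explicitly the switching and relabelling normalisation (and the orthogonality argument pinning down the $s/2$--$s/2$ sign split in case (2)) that the paper leaves implicit by merely citing its lemma, which fills in detail rather than changing the approach.
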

	\begin{proof}
		Since $\dot{\mathscr{B}}_m$ is bipartite and $\dot{H}$ is totally disconnected, then for every two vertices $u,v\notin V(\dot{H})$, $u \nsim v$, otherwise there is a induced signed graph $(K_3,\emptyset)$ in $\dot{\mathscr{B}}_m$. By Equations (\ref{1}) and (\ref{adjacency}), we get $B^TB=sI$.
		
		If $s\in 2\mathbb{Z}^{+}-1$ or $s\in 2\mathbb{Z}^{+} \backslash 4\mathbb{Z}^{+}$, then the results (1) and (2) hold by Lemma \ref{B1}. If $s\in \mathcal{N}(\mathcal{H})$, then $A_{\dot{\mathscr{B}}_m}^2=sI$ and $\dot{\mathscr{B}}_m$ is a signed strongly regular graph. Let $n$ be the order of $\dot{\mathscr{B}}_m$, $r,\, a,\, b,\, c$ be integers satisfying Equation (\ref{SRG}). Then $n=2s$, $r=s$ and $a=b=c=0$. Thus the result $(3)$ holds.
	\end{proof}
	
	\begin{Thm}\label{G2}
		Let $\dot{H}$ be a totally disconnected graph of order $s$, $\dot{\mathscr{B}}_m$ denote the maximal signed bipartite graph with $\dot{H}$ as a star complement for an eigenvaule $\mu$, where $s=\mu^2+1\geq 3$.
		\begin{enumerate}
			\item[(1)] If $\mu^2 \in 2\mathbb{Z}^{+} \backslash 4\mathbb{Z}^{+}$, then $\dot{\mathscr{B}}_m \cong^s \big( K_{2,\mu^2}\cup K_1, \, E(K_{1,\frac{\mu^2}{2}}) \big)$.
			\item[(2)] If $\mu^2\in \mathcal{N}(\mathcal{H})$, then 
			$\dot{\mathscr{B}}_m \cong^s SRG^{s}(2\mu^2)\cup (K_1, \, \emptyset)$.
			\item[(3)] If $\mu^2\in \mathcal{N}(\mathcal{C})$, then 
			$\dot{\mathscr{B}}_m \cong^s SRG^{s}(2\mu^2+2)$.
		\end{enumerate}
	\end{Thm}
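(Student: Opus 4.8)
Throughout I would work with the matrix $B$ of Equation (\ref{adjacency}). Recall that $B^{T}B=\mu^{2}I$ and that, since each column is a $(0,\pm1)$-vector of length $s=\mu^{2}+1$ with squared norm $\mu^{2}$, every column has exactly $\mu^{2}$ nonzero entries and a single $0$. A maximal $\dot{\mathscr{B}}_{m}$ corresponds to a $B$ with the maximal number $n-s$ of columns, which by Theorem \ref{B4} equals $2$, $\mu^{2}$ and $\mu^{2}+1$ in cases $(1)$, $(2)$, $(3)$ respectively. Because permuting the rows or columns of $B$ realizes a graph isomorphism and negating a row or a column realizes a switching, it suffices to determine $B$ up to these operations and then read off $\dot{\mathscr{B}}_{m}$ up to switching isomorphism.

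For cases $(1)$ and $(2)$ the first step is to force all columns to share one common zero row. In both cases $\mu^{2}$ is even ($\mu^{2}\in 2\mathbb{Z}^{+}\backslash 4\mathbb{Z}^{+}$ in $(1)$, and every Hadamard order at least $2$ is even in $(2)$). If two columns had their zeros in different rows, then on the $\mu^{2}-1$ rows where both are nonzero their entrywise products are $\pm1$ and must sum to $0$ by orthogonality, which is impossible since $\mu^{2}-1$ is odd; this is the parity argument already used in the proof of Theorem \ref{B4}. Hence all columns vanish on one common row, which yields an isolated vertex $(K_{1},\emptyset)$ of $\dot{\mathscr{B}}_{m}$, and deleting it leaves a $\mu^{2}\times(n-s)$ matrix $B'$ of $\pm1$ entries with ${B'}^{T}B'=\mu^{2}I$. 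In case $(1)$, $B'$ is a pair of orthogonal $\pm1$ columns; after switching its first column to the all-ones vector the second has exactly $\tfrac{\mu^{2}}{2}$ entries equal to $-1$, the two star-set vertices are joined to all $\mu^{2}$ surviving vertices of $\dot{H}$, and the negative edges form $K_{1,\frac{\mu^{2}}{2}}$, giving $\big(K_{2,\mu^{2}}\cup K_{1},E(K_{1,\frac{\mu^{2}}{2}})\big)$. In case $(2)$, $B'$ is a $\mu^{2}\times\mu^{2}$ Hadamard matrix, so by Theorem \ref{G1}$(3)$ the subgraph on the $2\mu^{2}$ non-isolated vertices is $SRG^{s}(2\mu^{2})$, and adjoining the isolated vertex yields $SRG^{s}(2\mu^{2})\cup(K_{1},\emptyset)$.

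For case $(3)$ the parity obstruction disappears, and instead I would exploit that $B$ is now square of order $s=\mu^{2}+1$ with $B^{T}B=(s-1)I$. Since $\det(B^{T}B)=(s-1)^{s}\neq 0$, $B$ is invertible and therefore $BB^{T}=(s-1)I$ as well; thus the rows of $B$ are also pairwise orthogonal of squared norm $\mu^{2}$ and each row likewise contains exactly one zero. Consequently no two columns can place their zeros in the same row, so the zeros occupy distinct rows and columns; permuting rows moves them onto the diagonal, and $B$ becomes a matrix with zero diagonal, $\pm1$ off-diagonal and $B^{T}B=(s-1)I$, i.e.\ a Conference matrix $\mathcal{C}(\mu^{2}+1)$. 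Then $A_{\dot{\mathscr{B}}_{m}}^{2}=\mathrm{diag}(B^{T}B,BB^{T})=\mu^{2}I$, the underlying graph is $K_{s,s}\backslash sK_{2}$, and $w_{2}(u,v)=0$ for all $u\neq v$, so $\dot{\mathscr{B}}_{m}$ is signed strongly regular and $\dot{\mathscr{B}}_{m}\cong^{s}SRG^{s}(2\mu^{2}+2)$.

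The step I expect to be the main obstacle is case $(3)$: showing that a maximal square $B$ is, up to switching isomorphism, a genuine Conference matrix. The delicate points are passing from $B^{T}B=(s-1)I$ to $BB^{T}=(s-1)I$ in order to obtain the one-zero-per-row property, and then verifying that the bipartite double of $\mathcal{C}(\mu^{2}+1)$ meets every defining condition of $SRG^{s}$ --- regularity and the uniform walk counts $a=b=c=0$ via Equation (\ref{SRG}) --- rather than merely satisfying $A^{2}=\mu^{2}I$. I would also be careful to state all three conclusions as characterizations up to switching isomorphism, since inequivalent Hadamard or Conference matrices of a given order need not produce isomorphic signed graphs.
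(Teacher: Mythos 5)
Your proof is correct and follows essentially the same route as the paper: the parity argument forcing all columns of $B$ to share a common zero row (hence an isolated vertex) in cases (1) and (2), with reduction to Theorem \ref{G1}, and then $A_{\dot{\mathscr{B}}_m}^2=\mu^2 I$ together with Equation (\ref{SRG}) in case (3). Your explicit verification in case (3) that maximality forces $B$ to be square and that $B^TB=(s-1)I$ implies $BB^T=(s-1)I$, so that $B$ is up to permutation and switching a genuine Conference matrix, merely fills in a detail that the paper's proof (which cites Lemma \ref{B2} and asserts $A_{\dot{\mathscr{B}}_m}^2=\mu^2I$ directly) leaves implicit.
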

	\begin{proof}
		By Equation (\ref{adjacency}), we get $B^TB=\mu^2I$. If $\mu^2 \in (2\mathbb{Z}^{+} \backslash 4\mathbb{Z}^{+}) \cup\mathcal{N}(\mathcal{H})$, then every two vertices in $\dot{\mathscr{B}}_m$ have the same neighbors. Otherwise the inner product between a $(-1,1)$-vector with odd rows and an all-one vector is zero, which is a contradiction. The results $(1)$ and $(2)$ hold by Theorem \ref{G1}.
		If $\mu^2\in \mathcal{N}(\mathcal{C})$, then $A_{\dot{\mathscr{B}}_m}^2=\mu^2I$,
		and $\dot{\mathscr{B}}_m$ is a signed strongly regular graph. Let $n$ be the order of $\dot{\mathscr{B}}_m$, $r,\, a,\, b,\, c$ be integers satisfying Equation (\ref{SRG}), then
		$n=2\mu^2+2$, $r=s+1$ and $a=b=c=0$ by Lemma \ref{B2}. Thus the result $(3)$ holds. 
	\end{proof}
	
	\begin{remark}
		Let $SRG^{s}(2s)$ is bipartite. If $s\in \mathcal{N}(\mathcal{H})$, then the underlying graph of $SRG^{s}(2s)$ is $K_{s,s}$. If $s\in \mathcal{N}(\mathcal{C})$, then the underlying graph of $SRG^{s}(2s)$ is $K_{s,s}\backslash s K_2$. 
	\end{remark}
	
	Let $(G_1, \Sigma^{-}_1)$ and $(G_2, \Sigma^{-}_2)$ be two signed bipartite graphs with the adjacency matrices 
	\[\left(\begin{array}{cc}
		\mathbf{0}	& B_1^T  \\ 
		B_1   & \mathbf{0}
	\end{array}\right) \quad \text{and} \quad 
	\left(\begin{array}{cc}
	\mathbf{0}	& B_2^T  \\ 
	B_2   & \mathbf{0}
	\end{array}\right), \]
	respectively. The signed graph $(G_1, \Sigma^{-}_1) \dot{\otimes} (G_2, \Sigma^{-}_2)$ is defined as the signed bipartite graph with the adjacency matrix 
	\[A_{(G_1, \Sigma^{-}_1) } \dot{\otimes} A_{(G_2, \Sigma^{-}_2)}=\left(\begin{array}{cc}
	\mathbf{0}	& B_1^T\otimes B_2^T  \\ 
	B_1 \otimes B_2  & \mathbf{0}
	\end{array}\right).\]
	Let $c(G_1, \Sigma^{-}_1)$ be for the disjoint union of $c$ copies of $(G_1, \Sigma^{-}_1)$.
	
	By Theorem \ref{C1}, \ref{G1} and \ref{G2}, the following result is obtained.
	\begin{Thm}
		Let $\dot{H}$ be a totally disconnected graph of order $s$, $\dot{\mathscr{B}}_m$ denote the maximal signed bipartite graph with $\dot{H}$ as a star complement for an eigenvalue $\mu$, where there are positive integers $p$ and $q$ such that $\mu^2=p q\geq2$.
		\begin{enumerate}
			\item[(1)] If $p,q\in \mathcal{N}(\mathcal{H})$ and $cpq\leq s<(c+1)pq \, (c=1,2,3,\cdots)$, then 
			\[\dot{\mathscr{B}}_m \cong^s \big( c \, SRG^s(2p)\dot{\otimes} SRG^s(2q) \big)
			\cup \big( (s-cpq )K_1,\emptyset \big).\]
			\item[(2)] 	If $q\in \mathcal{N}(\mathcal{C})$, then Table \ref{G} holds.	
			\begin{table}[H]
				\caption{The maximal signed bipartite graph $\dot{\mathscr{B}}_m$, where $\mu^2=pq$, $q\in \mathcal{N}(\mathcal{C})$ and $c=1,2,3,\cdots$.} 
				\label{G} 
				\centering
				\renewcommand\arraystretch{1.5} 
				\resizebox{\linewidth}{!}{
					\begin{tabular}{|c|c|c|@{}}
						\hline
						$p$ & $s$  & $\dot{\mathscr{B}}_m$ is switching isomorphic to \\
						\hline
						2 &  $2q$ or $2q+1$ & $\left( K_{2,2q}\cup (s-2q) K_1, E(K_{1,q }) \right)$\\
						\hline
						$p\in \mathcal{N}(\mathcal{H})$ &  $ cp(q+1)\leq s<(c+1)p(q+1) $& $\big( c \, SRG^s(2p) \dot{\otimes} SRG^s(2q+2) \big)$\\
						&& $\cup \big( (s-cp(q+1) )K_1,\emptyset \big)$\\
						\hline
						$p\in \mathcal{N}(\mathcal{C})$ &  $ c(p+1)(q+1)\leq s<(c+1)(p+1)(q+1) $& $\big( c \, SRG^s(2p+2) \dot{\otimes} SRG^s(2q+2) \big)$\\
						&& $\cup \big( (s-c(p+1)(q+1) )K_1,\emptyset \big)$\\
						\hline
				\end{tabular}}
			\end{table}
		\end{enumerate}
	\end{Thm}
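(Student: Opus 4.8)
The plan is to start from the block form (\ref{adjacency}) of $A_{\dot{\mathscr{B}}_m}$, where $B^{T}B=\mu^{2}I$ and the columns of $B$ are the vectors $\mathbf{b}_u$ attached to the star-set vertices. Since $\dot{\mathscr{B}}_m$ is bipartite and $\dot{H}$ is totally disconnected, the star set is itself totally disconnected, so the whole graph is encoded by $B$. By Theorem \ref{C1} the maximum order $n$ is attained by an explicit $B$ whose transpose is $B^{T}=\big(I_c\otimes M_1^{T}\otimes M_2^{T}\ \ \mathbf{0}\big)$, where the pair $(M_1,M_2)$ equals $(\mathcal{H}(p),\mathcal{H}(q))$ in case (1), and $(\mathcal{H}(p),\mathcal{C}(q+1))$ or $(\mathcal{C}(p+1),\mathcal{C}(q+1))$ in subcases (2.2) and (2.3) respectively. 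So the argument reduces to reading off the signed graph attached to this specific $B$.

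The observation that does the real work is that the operation $\dot{\otimes}$ was defined precisely so that the $B$-matrix of $(G_1,\Sigma_1^{-})\dot{\otimes}(G_2,\Sigma_2^{-})$ is the Kronecker product $B_1\otimes B_2$. Combined with the identifications supplied by the earlier theorems — that the signed bipartite graph whose $B$-matrix is a Hadamard matrix $\mathcal{H}(p)$ is switching isomorphic to $SRG^{s}(2p)$ (Theorem \ref{G1}(3)), and the one whose $B$-matrix is a Conference matrix $\mathcal{C}(q+1)$ is switching isomorphic to $SRG^{s}(2q+2)$ (Theorem \ref{G2}(3)) — it follows that $M_1^{T}\otimes M_2^{T}$ is exactly the $B$-matrix of a single copy of the tensor product $SRG^{s}(2p)\dot{\otimes}SRG^{s}(2q)$ in case (1), and of the corresponding Conference variants in subcases (2.2), (2.3). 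First I would record these identifications, then invoke the definition of $\dot{\otimes}$ to assemble them.

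Next I would translate the remaining structure of $B^{T}$ into a disjoint union. The leading factor $I_c$ makes $I_c\otimes M_1^{T}\otimes M_2^{T}$ block diagonal with $c$ equal diagonal blocks, and a block-diagonal $B$ corresponds to a disjoint union of the associated signed bipartite graphs, so this part contributes $c\,\big(SRG^{s}(2p)\dot{\otimes}SRG^{s}(2q)\big)$. The trailing $\mathbf{0}$ block corresponds to those vertices of $\dot{H}$ adjacent to no star-set vertex; these are precisely $s-cpq$ (resp.\ $s-cp(q+1)$, $s-c(p+1)(q+1)$) isolated vertices, contributing $\big((s-cpq)K_1,\emptyset\big)$. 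Assembling the two parts yields the stated switching isomorphism in case (1) and in subcases (2.2), (2.3). For the row $p=2$ of Table \ref{G} the optimal $B$ has only two columns by Lemma \ref{B}, so there I would instead read the component directly off Theorem \ref{G1}(2) (resp.\ \ref{G2}(1)) according as $s=2q$ (resp.\ $s=2q+1$), obtaining the $\big(K_{2,2q},E(K_{1,q})\big)$ part together with the $s-2q$ isolated vertices.

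The main obstacle is not the assembly but the bookkeeping that makes it rigorous: one must verify that the Kronecker/identity/zero block decomposition of $B^{T}$ from Theorem \ref{C1} corresponds \emph{on the nose} to the claimed disjoint-union decomposition, i.e.\ that permuting the rows and columns of $A_{\dot{\mathscr{B}}_m}$ (an isomorphism) together with diagonal sign changes (a switching) carries the extremal $B$ into block-diagonal-plus-zero normal form. This is exactly where switching isomorphism, rather than plain isomorphism, is essential, since Hadamard and Conference matrices are only determined up to row and column sign changes and permutations; confirming that these matrix symmetries are precisely the switchings and isomorphisms of $\dot{\mathscr{B}}_m$ is the crux. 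Once that correspondence is in place, the Kronecker rows of Table \ref{G} follow by substituting the three choices of $(M_1,M_2)$ with the matching ranges for $s$, and the $p=2$ row follows from the direct reading above.
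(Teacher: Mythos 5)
You have essentially reproduced the paper's own route: the paper's entire justification for this theorem is the single sentence ``By Theorem \ref{C1}, \ref{G1} and \ref{G2}, the following result is obtained,'' and your argument unpacks exactly that derivation --- the extremal $B^{T}=\left( I_c\otimes M_1^{T}\otimes M_2^{T} \quad \mathbf{0} \right)$ from Theorem \ref{C1}, read through the definition of $\dot{\otimes}$ and the identifications of Hadamard and Conference blocks with $SRG^{s}(2p)$ and $SRG^{s}(2q+2)$ in Theorems \ref{G1} and \ref{G2}, with the trailing zero block yielding the isolated vertices and the $p=2$ row handled via Lemma \ref{B} together with Theorems \ref{G1}(2) and \ref{G2}(1). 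If anything you are more explicit than the paper, which leaves entirely implicit the point you correctly flag as the crux, namely that an arbitrary maximal $\dot{\mathscr{B}}_m$ (not merely the constructed example) is carried into this block normal form by switchings and vertex permutations.
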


	\begin{remark}
		Let $\dot{H}$ be a totally disconnected graph of order $s$, $\dot{\mathscr{B}}_m$ denote the maximal signed bipartite graph with $\dot{H}$ as a star complement for an eigenvalue $\mu$, where there are positive integers $p$ and $ q$ ($p>q$) such that $\mu^2=p q\geq 2$. Then by Theorem \ref{C2}, the three following results are obtained.
		\begin{enumerate}
			\item[(1)] If $p\in \mathcal{N}(\mathcal{H})$, $q\in \mathcal{N}(\mathcal{C})$ and $s=pq$, then $\dot{\mathscr{B}}_m\cong^s SRG^s(2p)\dot{\otimes} (K_{1,q}, \emptyset)$.
			\item[(2)] If $p,\, q\in \mathcal{N}(\mathcal{C}) $ and $s=pq$, then $\dot{\mathscr{B}}_m \cong^s \left(K_{1,pq}, \emptyset \right)$.
			\item[(3)] If $p,\, q\in \mathcal{N}(\mathcal{C})$ and $s=(p+1)q$, then 
			$\dot{\mathscr{B}}_m \cong^s  SRG^s(2p+2)\dot{\otimes} (K_{1,q}, \emptyset)$.
		\end{enumerate}	
	\end{remark}

	\noindent	$\mathbf{References}$
	\nolinenumbers
	\bibliographystyle{amsplain}

\begin{thebibliography}{10}                    
	
		\bibitem{37}
		D. Cvetkovi$\acute{\text{c} }$, P. Rowlinson, S. K. Simi$\acute{\text{c} }$, A study of eigenspaces of graphs, Linear Algebra and its Applications. 182 (1993) 45-66.
		\bibitem{19} 
		D. Cvetkovi$\acute{\text{c} }$, P. Rowlinson, S. K. Simi$\acute{\text{c} }$, Eigenspaces of Graphs, Cambridge University Press, New York, 1997. 
		\bibitem{35}
		D. Cvetkovi$\acute{\text{c} }$, P. Rowlinson, S. K. Simi$\acute{\text{c} }$, An Introduction to the Theory of Graph Spectra, Cambridge University Press, New York, 2010. 
		
		\bibitem{42} 
		J. Hadamard, Resolution d'une question relative aux determinants, Bulletin des Sciences Mathematiques. 17 (1893) 240-246. 
		
		\bibitem{40}
		Y.J. Ionin, M.S. Shrikhande, Combinatorics of Symmetric Designs, Cambridge University Press, New York, 2006.
		
		\bibitem{2}   
		R. Mulas, Z. Stani$\acute{\text{c} }$, Star complements for $\pm 2$ in signed graphs, Special Matrices. 10 (2022) 258-266.
		
		\bibitem{39}  
		R. E. A. C. Paley, On orthogonal matrices, Journal of Mathematics and Physics. 12 (1933) 311-320.
		
		\bibitem{1} 
		F. Ramezani, Constructing signed strongly regular graphs via star complement technique, Mathematical Sciences. 12 (2018) 157-161.
		\bibitem{3}     
		F. Ramezani, P. Rowlinson, Z. Stani$\acute{\text{c} }$, On eigenvalue multiplicity in signed graphs, Discrete Mathematics. 343 (2020) 111982. 
		
		\bibitem{43}
		Z. Stani$\acute{\text{c} }$, On strongly regular signed graphs, Discrete Applied Mathematics. 271 (2019) 184-190.
		\bibitem{17}
		Z. Stani$\acute{\text{c} }$, Signed graphs with totally disconnected star complements, Revista de la Uni$\acute{\text{o} }$n Matem$\acute{\text{a} }$tica Argentina. 62 (2021) 95-104.
		
		\bibitem{41}
		J. J. Sylvester, Thoughts on Orthogonal Matrices, Simultaneous Sign Successions, and Tessellated Pavements in Two or More Colors, with Applications to Newton's Rule, Ornamental TileWork, and the Theory of Numbers, Philosophical Magazine. 34 (1867) 461-475.	
		
		\bibitem{4}   
		X. Yuan, Y. Mao, L. Liu, Maximal signed graphs with odd signed cycles as star complements, Applied Mathematics and Computation. 408 (2021) 126367.
		
		
		
		
	\end{thebibliography}

\end{document}